\author[Mathieu Kohli]{Mathieu Kohli$^{1}$}
\thanks{This work has been supported by the ANR projects  ANR-11-LABX-0056-LMH , ANR-15-IDEX-02 and ANR-15-CE40-0018}
\date{\today \\
 $\text{ }\quad\text{ }^{1}$ CMAP Ecole Polytechnique}
\title[Geodesic curvature in the Heisenberg group]{A metric interpretation of the geodesic curvature in the Heisenberg group}
\newtheorem{theo}{Theorem}[section]
\newtheorem{de}[theo]{Definition}
\newtheorem{cor}[theo]{Corollary}
\newtheorem{lem}[theo]{Lemma}
\newtheorem{rem}{Remark}
\let\olddefinition\rem
\renewcommand{\rem}{\olddefinition\normalfont}
\newtheorem{prop}[theo]{Proposition}
\begin{document}
\begin{abstract}In this paper we study the notion of geodesic curvature of smooth horizontal curves parametrized by arc lenght in the Heisenberg group, that is the simplest sub-Riemannian structure. Our goal is to give a metric interpretation of this notion of geodesic curvature as the first corrective term in the Taylor expansion of the distance between two close points of the curve. 
\end{abstract}
\maketitle
\tableofcontents
\section{Introduction}
Since the first half of the nineteenth century and the introduction by Carl Friedrich Gauss of the concept of intrinsic curvature of a surface, several other notions of curvature have been defined. Curvature functions provide a scalar measure of the local geometry around each point of a space or of a geometric object embedded in a space. Our work, that focuses on geodesic curvature of curves in the Heisenberg group, is motivated by \cite{BaloghTysonVecchi16}, \cite{DinizVeloso16} and \cite{ChiuLai13}. Our contribution is to show that the geodesic curvature of a curve corresponds to a measure of a metric property of that same curve. More precisely, we prove that this geodesic curvature appears in the Taylor expansion of the distance between two close points of the curve.
 
Let us recall what happens in the Euclidean case. In this setting, the classical notion of geodesic curvature of a curve $\zeta$ parametrized by arc length at a point $\zeta(t)$ is simply defined as  $\|\zeta''(t)\|$. 
In a Riemannian manifold one can do the same, since it is possible to differentiate the velocity $\zeta'$ along the curve $\zeta$ thanks to the canonical connection $\nabla$, called the Levi-Civita connection that has no torsion and that respects the metric. The previous formula for the geodesic curvature at a point $\zeta(t)$ becomes $\|\nabla_{\zeta'(t)}\zeta'\|$. The geodesic curvature of a curve quantifies its deviation with respect to the Levi-Civita connection.
 
  Notice that in a two dimensional oriented Riemannian surface, we can define the
 \emph{signed} geodesic curvature as $\pm \|\nabla_{\zeta'(t)}\zeta'\|$ where the sign is positive if and only if the frame $\left( \zeta'(t),\nabla_{\zeta'(t)}\zeta' \right)$ is positively oriented. 
This is relevant since if two smooth curves parametrized by arc length have the same initial point, the same initial velocity and the same signed geodesic curvature as a function of time, they are actually the same curve. This result can be interpreted as a ``Frenet-Serret'' theorem in a two dimensional Riemannian space. For more information about Frenet-Serret theory, see for example \cite{manfredo1976carmo}.

To come back to an arbitrary dimension, another perspective on the geodesic curvature of a curve parametrized by arc lenght is the metric one. The key idea is that the geodesic curvature of such a curve is zero along the whole curve if and only if it is a geodesic. In other words, the geodesic curvature is identically zero if and only if the distance between $\zeta(s)$ and $\zeta(s+t)$ is equal to $t$ for every $s$ and for $t$ small enough.  Now for an arbitrary curve parametrized at unit speed, the distance between $\zeta(s)$ and $\zeta(s+t)$ is smaller than $t$ and we expect the correction should depend on the geodesic curvature. As a matter of fact, as a consequence of the expansion of the exponential map, we obtain that for every time $s$,
\begin{align}\label{RiemannianExpansion}
d^2\left( \zeta(s),\zeta(s+t)\right)=t^2-\frac{\|\nabla_{\zeta'(s)}\zeta'\|^2 }{12}t^4 +\mathcal{O}(t^5).
\end{align}

A natural generalization of Riemannian spaces are sub-Riemannian spaces. In order to understand what happens in such spaces, we begin by studying the easiest example, namely the Heisenberg group.
 
 The Heisenberg group $\mathbb{H}$ is $\mathbb{R}^{3}$ whose coordinates we call $x$, $y$ and $z$ endowed with a two dimensional distribution spanned by 
\begin{align*}
X_1 =\frac{\partial}{\partial x}-\frac{y}{2}\frac{\partial}{\partial z}, \qquad X_2=\frac{\partial}{\partial y} +\frac{x}{2}\frac{\partial}{\partial z},
\end{align*}
which we choose to be an orthonormal frame.

A smooth curve $\zeta(t)=(x(t),y(t),z(t))$ that is everywhere tangent to the distribution is said to be horizontal (we will define the notion of horizontality for non necessarily smooth curves afterwards).

 We consider a smooth horizontal curve $\zeta$ such that the norm of $\zeta'$ decomposed on the orthonormal frame $(X_1,X_2)$ is everywhere one and we define the  \emph{characteristic deviation} of $\zeta$ as 
\begin{align}
h_{\zeta}(t)=\dot{x}(t)\ddot{y}(t)-\dot{y}(t)\ddot{x}(t).\label{defintionh}
\end{align}
Now if we write
\begin{align*}
\zeta'(t) =\cos(\theta(t))X_1+\sin(\theta(t))X_2,
\end{align*}
then $\dot{x}(t)=\cos(\theta(t))$ and $\dot{y}(t)=\sin(\theta(t))$ so we can simplify the expression of $h_{\zeta}$ :
\begin{align}
h_{\zeta}(t)=\dot{\theta}(t).\label{expressionhtheta}
\end{align}

We now summarize in a proposition two properties we already mentioned concerning the curvature of curves in the Riemannian case, that are also valid in the Heisenberg group. We prove the following proposition in this paper and it is also possible to recover it from \cite{ChiuLai13}
\begin{prop}\label{hcharacteristicandgeodesic}
\emph{i.}If $\zeta_1:]-T,T[\rightarrow\mathbb{H}$ and $\zeta_2:]-T,T[\rightarrow\mathbb{H}$ are two smooth horizontal curves parametrized by arc length such that for every $t$ in $]-T,T[$, 
\begin{align*}
h_{\zeta_1}(t)=h_{\zeta_2}(t)
\end{align*}
then there exists $\iota$ an isometry of the Heisenberg group such that
\begin{align*}
\zeta_2=\iota\circ\zeta_1.
\end{align*}

\emph{ii.} The derivative of the characteristic deviation $\dot{h}_{\zeta}$ is identically equal to zero along $\zeta$ if and only if $\zeta$ is a geodesic.
\end{prop}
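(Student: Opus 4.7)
The plan is to treat the two parts separately: part (i) by normalizing initial conditions via the isometry group and then integrating uniquely determined ODEs, and part (ii) by characterizing geodesics through a Lagrange-multiplier computation that identifies constant $\dot\theta$ with the geodesic equations.

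For (i), I would first use the isometries of $\mathbb{H}$ generated by left translations and rotations around the $z$-axis. One checks directly that left translation $L_{(a,b,c)}(x,y,z)=(a+x,b+y,c+z+\tfrac12(ay-bx))$ preserves the frame $(X_1,X_2)$ (since they are left-invariant), and that a rotation $R_\alpha$ of angle $\alpha$ around the $z$-axis acts on the distribution by the rotation of the same angle on $(X_1,X_2)$. Thus by composing an appropriate left translation of $\zeta_2$ with a rotation around the common starting point, we may assume without loss of generality that $\zeta_1(0)=\zeta_2(0)$ and $\theta_1(0)=\theta_2(0)$, where the angles are the ones introduced in the excerpt. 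By hypothesis and by \eqref{expressionhtheta}, $\dot\theta_1=\dot\theta_2$, hence $\theta_1\equiv\theta_2$. Consequently $\dot x_1=\dot x_2$ and $\dot y_1=\dot y_2$, and integration with matching initial values gives $x_1\equiv x_2$, $y_1\equiv y_2$. Horizontality then forces $\dot z_i=\tfrac12(x_i\dot y_i-y_i\dot x_i)$, so the same $z$-components, and $\zeta_1\equiv\zeta_2$. Unwinding the isometries yields the required $\iota$.

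For (ii), I would derive the Euler–Lagrange equations for horizontal arc-length-parametrized curves that minimize length (equivalently, energy $\tfrac12\int(\dot x^2+\dot y^2)\,dt$) subject to the horizontality constraint $\dot z-\tfrac12(x\dot y-y\dot x)=0$. Introducing a Lagrange multiplier $\lambda(t)$, the variation in $z$ gives $\dot\lambda=0$ so $\lambda$ is constant, and the variations in $x,y$ produce
\begin{align*}
\ddot x=-\lambda\dot y,\qquad \ddot y=\lambda\dot x.
\end{align*}
Substituting $\dot x=\cos\theta$, $\dot y=\sin\theta$ turns both equations into $\dot\theta=\lambda$, so $h_\zeta=\dot\theta$ is constant along any geodesic, i.e.\ $\dot h_\zeta\equiv 0$. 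Conversely, if $\dot h_\zeta\equiv 0$ then $\dot\theta$ is a constant $\lambda$, and picking this $\lambda$ as multiplier shows that $\zeta$ is a critical point of the constrained energy; since in the Heisenberg group normal extremals are locally length-minimizing and there are no non-trivial abnormal minimizers, $\zeta$ is a geodesic.

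The routine part is (i), which amounts to checking that the two isometries really normalize the initial conditions and then integrating three ODEs with matching data. The principal difficulty lies in (ii): one must either rely on the Pontryagin Maximum Principle for sub-Riemannian structures or carry out carefully the constrained variational argument sketched above, and also invoke the known absence of strictly abnormal minimizers in $\mathbb{H}$ to conclude the converse direction. Once the geodesic equation is identified as $\dot\theta=\mathrm{const}$, the equivalence with $\dot h_\zeta\equiv 0$ is immediate from \eqref{expressionhtheta}.
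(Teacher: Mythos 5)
Your argument for part \emph{(i)} is essentially the paper's: the paper also normalizes by a left translation and a rotation $R_{\alpha}$ about the $z$-axis, uses the invariance of $h_{\zeta}$ under these isometries (its Lemma \ref{lemmaactionRL}) together with \eqref{expressionhtheta} to conclude that the two normalized curves are integral curves of the same time-dependent horizontal vector field with the same initial point, and hence coincide; your explicit integration of $\dot x,\dot y$ and then of the horizontality constraint for $z$ is the same computation phrased slightly differently. For part \emph{(ii)} you take a genuinely different route. The paper simply invokes its Proposition \ref{propgeodesics}, the explicit parametrization of unit-speed geodesics from the origin, from which it reads off that a curve is a geodesic if and only if $\theta$ is affine, i.e.\ $h_{\zeta}=\dot\theta$ is constant. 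You instead re-derive the geodesic equation from scratch via a constrained variational (Lagrange-multiplier / normal Pontryagin) argument, obtaining $\ddot x=-\lambda\dot y$, $\ddot y=\lambda\dot x$ with $\lambda$ constant and hence $\dot\theta=\lambda$. Your computation is correct, and it buys self-containedness: you do not need the closed-form geodesics. The price is that you must import two nontrivial standard facts about $\mathbb{H}$ --- that normal extremals are locally length-minimizing, and that there are no strictly abnormal minimizers (the distribution being contact) --- which you do state but do not prove; the paper's route avoids this by resting on Proposition \ref{propgeodesics}, which it states without proof but which is classical. Either way the logical burden is pushed onto a cited standard result, so both proofs are acceptable and of comparable depth.
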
  To use the same term as we did in the Riemannian setting, we can say that a ``Frenet-Serret'' characterization of $\zeta$ is given by $\zeta(0)$, ${\zeta}'(0)$ and the knowledge of $h_{\zeta}(t)$ at all times $t$. 

Furthermore, according to the second point of the previous proposition, the quantity
\begin{align*}
 k_{\zeta}(t):=\dot{h}_{\zeta}(t)=\ddot{\theta}(t)
\end{align*}
is called the  \emph{geodesic curvature} of $\zeta$ at time $t$.

 We are interested in the influence of this geodesic curvature on the distance between two close points of the curve we are considering. The main result we prove is the following :
\begin{theo}\label{TheTheorem} If $\zeta:]-T,T[\rightarrow\mathbb{H}$ is a smooth horizontal curve parametrized by arc length in the Heisenberg group then
\begin{align*}
d_{\mathbb{H}}^2(\zeta(0),\zeta(t))=t^2-\frac{\begin{pmatrix}
k_{\zeta}(0)
\end{pmatrix}^2}{720}t^6+\mathcal{O}(t^7).
\end{align*}
\end{theo}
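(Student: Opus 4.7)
My plan is to combine the explicit parametrization of sub-Riemannian geodesics in $\mathbb{H}$ with the Heisenberg dilations $\delta_s(x,y,z) = (sx, sy, s^2 z)$, which satisfy $d_\mathbb{H}(\delta_s P, \delta_s Q) = s\, d_\mathbb{H}(P, Q)$. By Proposition~\ref{hcharacteristicandgeodesic} the characteristic deviation $h_\zeta$ is invariant under isometries, so after a left translation and a rotation about the $z$-axis I may assume $\zeta(0) = 0$ and $\theta(0) = 0$. Writing $D(P) := d_\mathbb{H}^2(0, P)$, the dilation identity gives $d_\mathbb{H}^2(0, \zeta(t)) = t^2\, D\bigl(\delta_{1/t}\zeta(t)\bigr)$, and the rescaled endpoint $P(t) := \delta_{1/t}\zeta(t)$ converges as $t \to 0$ to $(1, 0, 0)$, a regular point of $D$ since it lies off the $z$-axis. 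Everything thus reduces to Taylor-expanding the smooth function $t \mapsto D(P(t))$ up to order $t^4$.

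Next I would derive an explicit power-series expansion of $D$ near $(1,0,0)$. Parametrizing a unit-speed geodesic from $0$ by its constant characteristic deviation $\lambda$, a direct integration gives, at time $d$,
\[\rho^2 := x^2 + y^2 = \frac{4\sin^2(\lambda d/2)}{\lambda^2}, \qquad z = \frac{\lambda d - \sin(\lambda d)}{2\lambda^2}.\]
Setting $u := \lambda d$, the ratio $z/\rho^2 = (u - \sin u)/(8\sin^2(u/2))$ depends on $u$ alone. Inverting the series $z/\rho^2 = u/12 + u^3/360 + \cdots$ and substituting into $d^2 = \rho^2 u^2/(4\sin^2(u/2))$ yields
\[D(P) = \rho^2 + \frac{12\, z^2}{\rho^2} - \frac{144}{5}\,\frac{z^4}{\rho^6} + O\!\left(\frac{z^6}{\rho^{10}}\right).\]

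Third, I would Taylor-expand the curve itself. Writing $\theta(s) = h_0 s + \tfrac{k_0}{2}s^2 + \tfrac{\dot{k}_0}{6}s^3 + \cdots$ with $h_0 = h_\zeta(0)$, $k_0 = k_\zeta(0)$, and integrating the horizontal lift system $\dot{x} = \cos\theta$, $\dot{y} = \sin\theta$, $\dot{z} = (x\sin\theta - y\cos\theta)/2$ produces explicit power series for $x(t), y(t), z(t)$. Then $P(t) = (1 + \alpha(t), \beta(t), \gamma(t))$ with
\[\alpha(t) = -\tfrac{h_0^2}{6}t^2 + O(t^3), \qquad \beta(t) = \tfrac{h_0}{2}t + \tfrac{k_0}{6}t^2 + O(t^3), \qquad \gamma(t) = \tfrac{h_0}{12}t + \tfrac{k_0}{24}t^2 + O(t^3).\]

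The main obstacle is the final substitution and the verification that everything not proportional to $k_0^2$ cancels. Although $P(t)$ depends on $h_0, k_0, \dot{k}_0, \ddot{k}_0, \ldots$, the claim is $D(P(t)) = 1 - (k_0^2/720)\,t^4 + O(t^5)$. At order $t^2$, summing the contributions of $2\alpha$, $\beta^2$ and $12\gamma^2$ gives $-h_0^2/3 + h_0^2/4 + h_0^2/12 = 0$; an analogous identity in $h_0 k_0$ kills the $t^3$ coefficient. At order $t^4$ one must track several separate contributions (from $2\alpha$, $\alpha^2$, $\beta^2$, $12\gamma^2$, $-24\alpha\gamma^2$, $-12\beta^2\gamma^2$, and $-(144/5)\gamma^4$) to see that the $h_0^4$-coefficient vanishes; critically, the correction $-(144/5) z^4/\rho^6$ is indispensable here, since without it the $h_0^4$ cancellation fails. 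A further cancellation eliminates the $h_0 \dot{k}_0$ terms, leaving only $-k_0^2/720$, and multiplication by $t^2$ finishes the proof.
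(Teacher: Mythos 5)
Your proposal is correct and follows essentially the same route as the paper: both rest on the exact formula $d_{\mathbb{H}}^2(0,P)=\rho^2/\mathrm{sinc}^2\circ\phi(z/\rho^2)$ (your expansion $\rho^2+12z^2/\rho^2-\tfrac{144}{5}z^4/\rho^6+\cdots$ is precisely the paper's $1+12u^2-\tfrac{144}{5}u^4$ series applied to $u=z/\rho^2$), combined with Taylor expansions of $x$, $y$, $z$ obtained from $\theta(s)=h_0s+\tfrac{k_0}{2}s^2+\cdots$, and the same final cancellation check. The only differences are cosmetic: you normalize by the dilation $\delta_{1/t}$ and expand around $(1,0,0)$, and you rederive the distance formula from the geodesic parametrization where the paper cites it; the arithmetic of the order-$t^4$ cancellations you describe matches the paper's computation in its appendix.
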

We notice that there is a qualitative jump between what happens in the Riemannian case and the Heisenberg group. Indeed, in the Heisenberg group, the correction in the Taylor expansion of the squared distance between two close points of a curve appears at order six, when in Riemannian spaces, it appears before, at order four.

However, we may wonder if the Taylor expansion of the distance between two points
of a horizontal curve in the Heisenberg group is the limit of the Taylor expansion of that distance in Riemannian structures that ``tend to'' the Heisenberg group ? This is \emph{not} the case if we consider the easiest way in which we can imagine
Riemannian spaces that tend to the Heisenberg group, but there is nevertheless a link between these Taylor expansions. In fact this comes from the interpretation of the characteristic deviation of a curve in the Heisenberg group that was given in \cite{BaloghTysonVecchi16} in terms of Riemannian curvature of the same curve in Riemannian spaces "approximating" the Heisenberg group.

We then complete our brief overview of the characteristic deviation of a curve. We link this deviation to the covariant derivative of the velocity of the curve with respect to $\overline{\nabla}$, the Tanaka-Webster connection in the Heisenberg group. More specifically, we have 
\begin{align*}
|h_{\zeta}(t)|=\left\Vert \overline{\nabla}_{\zeta'(t)}\zeta' \right\Vert_{\mathbb{H}},
\end{align*}
which entails that
\begin{align*}
|k_{\zeta}(t)|=\left| \frac{\text{d}}{\text{d}t} \left\Vert \overline{\nabla}_{\zeta'(t)}\zeta' \right\Vert_{\mathbb{H}}\right|.
\end{align*}
We also notice that the characteristic deviation of a curve corresponds to the Euclidean curvature of the projection of the curve along the $z$-axis on the $(x,y)$-plane. In particular, we find out that curves with constant geodesic curvature
are projected onto so called ``Euler spirals''.

\subsection*{Acknowledgment} I thank my phD advisor Davide Barilari for having presented me with the different aspects of sub-Riemannian curvature that have been studied up to now and encouraged me to work in that direction as well as for having showed me how to write a
paper in a scientific style.

\section{The Heisenberg group}
Here we will quickly present what we need to know about the Heisenberg group. We also refer to \cite{montgomerybook}, \cite{bellaiche}, \cite{ABB} and \cite{riffordbook}.
We have already introduced the Heisenberg group $\mathbb{H}$ as $\mathbb{R}^3$ with coordinates $x$, $y$ and $z$ endowed with a sub-Riemannian structure whose distribution is spanned by the orthonormal frame
\begin{align}\label{TheOrthoFrame}
X_1 =\frac{\partial}{\partial x}-\frac{y}{2}\frac{\partial}{\partial z}\text{,\qquad}X_2=\frac{\partial}{\partial y} +\frac{x}{2}\frac{\partial}{\partial z}.
\end{align}
We also define
\begin{align}\label{TheReebField}
X_3=\frac{\partial}{\partial z}.
\end{align}

We call $g$ the metric on the distribution whose orthonormal frame is $(X_1,X_2)$.

We say that the curve $\zeta :]-T,T[\rightarrow \mathbb{H}$ is horizontal if $\zeta$ is a Lipshitz curve that is almost everywhere tangent to the distribution, whose speed defined with respect to the orthonormal frame $(X_1,X_2)$ is measurable and essentially bounded.

We can compute the length of a horizontal curve by integrating its norm along the curve. The distance between two points is defined as the infimum of length of curves that link those two points. This infimum happens to be a minimum.

We also emphasize the fact that the Heisenberg group is in fact a Lie group on which the sub-Riemannian structure is left-invariant, where the group law $*$ is given by :
\begin{align}\label{multiplication}
\left(x_1,y_1,z_1\right)*\left(x_2,y_2,z_2\right)&=\left(x_1+x_2,y_1+y_2,z_1+z_2+\frac{1}{2}\left(x_1 y_2 - y_1 x_2 \right)\right).
\end{align}
\begin{rem}
In order to study properties of curves that only depend on the sub-Riemannian distance, it is sufficient to consider curves that leave from the origin at time zero, since every other curve can be sent to such a curve by the isometry that corresponds to the left-multiplication by the inverse of the initial point.
\end{rem}
Moreover, we introduce dilations. Specifically we call "dilation centered at 0 of coefficient $r$", where $r$ is a positive real number, the map
\[
\begin{array}{rccl}
\delta_{r}:&\mathbb{H}&\longrightarrow &\mathbb{H}\\
&\left(x,y,z\right)&\longmapsto&\left(rx,ry,r^2z\right).
\end{array}
\]
Dilations preserve the distribution and transform the Heisenberg group's norm $\|\cdot\|_{\mathbb{H}}$ of a horizontal vector $V$  through the following process :
\begin{align}\label{normdilation}
\|{\delta_{r}}_* (V)\|_{\mathbb{H}}=r\| V\|_{\mathbb{H}}.
\end{align}
Moreover, dilations satisfy a certain homogeneity property. For any points $A$ and $B$ in $\mathbb{H}$ :
\begin{align*}
d_{\mathbb{H}}\left(\delta_r(A),\delta_r(B)\right)=rd_{\mathbb{H}}\left(A,B\right).
\end{align*}

Another interesting piece of information about the Heisenberg group is the expression of the geodesics in this space. We recall that a geodesic is a horizontal curve $\gamma:\mathbb{R}\rightarrow\mathbb{H}$ parametrized at constant speed such that for any $t$ in $\mathbb{R}$ and for $s$ in $\mathbb{R}$ close enough to $t$, the lenght of the curve $\gamma$ between times $t$ and $s$ is equal to the distance between $\gamma(t)$ and $\gamma(s)$.

 It is sufficient to give the expression of geodesics parametrized by arc lenght leaving from the origin since the Heisenberg group is a Lie group, it follows that all the other geodesics will be left translations and reparametrizations of these geodesics.
\begin{prop} \label{propgeodesics} A curve $\gamma$ is a geodesic parametrized by arc length leaving from the origin at time zero if, and only if, there exist two real numbers $\omega$ and $\theta_0$ such that the coordinates $\left( x(t),y(t),z(t) \right)$ of $\gamma(t)$ are 
\[\left\lbrace
\begin{array}{rl}
x(t)&=\frac{\sin\left(\omega t +\theta_0\right)-\sin\left( \theta_0\right)}{\omega} \\
y(t)&=\frac{\cos\left( \theta_0\right)-\cos\left(\omega t +\theta_0\right)}{\omega}\\
z(t)&=\frac{1}{2\omega^2}\left( \omega t -\sin\left( \omega t \right) \right),
\end{array}\right.
\]
for $\omega\neq 0$. When $\omega=0$ these formulas become :
\[\left\lbrace
\begin{array}{rl}
x(t)&=t\cos\left( \theta_0\right) \\
y(t)&=t\sin\left(\theta_0\right)\\
z(t)&=0.
\end{array}\right. 
\]
\end{prop}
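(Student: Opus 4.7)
The plan is to apply the Pontryagin Maximum Principle in its sub-Riemannian form. Since the Heisenberg distribution is bracket-generating at step two and admits no nontrivial abnormal minimizer, every length-minimizing arc is the projection of a normal extremal of the sub-Riemannian Hamiltonian
\[
H(q,p) = \tfrac{1}{2}\left(\langle p, X_1(q)\rangle^2 + \langle p, X_2(q)\rangle^2\right).
\]
I would begin by writing the covector as $p=(p_x,p_y,p_z)$ and computing Hamilton's equations explicitly from the expressions for $X_1,X_2$ in \eqref{TheOrthoFrame}.

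Second, I would observe that $H$ does not depend on the coordinate $z$, so $p_z$ is a first integral along extremals; denote its value by $\omega$. Introducing $u_1=\langle p,X_1\rangle$ and $u_2=\langle p,X_2\rangle$, which coincide with $\dot x$ and $\dot y$ respectively because $(X_1,X_2)$ is an orthonormal frame, Hamilton's equations translate into the planar linear system
\[
\dot u_1 = -\omega u_2, \qquad \dot u_2 = \omega u_1.
\]
For an arc-length parametrization one has $u_1^2+u_2^2=1$, so I write the initial condition as $(u_1(0),u_2(0))=(\cos\theta_0,\sin\theta_0)$ for some angle $\theta_0$.

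Third, I would solve the system to obtain $u_1(t)=\cos(\omega t+\theta_0)$ and $u_2(t)=\sin(\omega t+\theta_0)$, then integrate with $\zeta(0)=0$ to recover the stated expressions for $x(t)$ and $y(t)$ when $\omega\neq 0$ (and the straight-line limit when $\omega=0$). The third coordinate is obtained from the horizontality constraint, which rewrites as $\dot z=\tfrac{1}{2}(x\dot y - y\dot x)$; a short trigonometric simplification yields $\dot z = (1-\cos(\omega t))/(2\omega)$, which integrates to the claimed formula for $z(t)$.

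The converse direction is then immediate: any curve of the given form is, by construction, the projection of a normal extremal, hence locally length-minimizing, which is exactly the definition of geodesic used here. The only conceptually delicate point is the preliminary assertion that there are no nontrivial abnormal minimizers in the Heisenberg group; this I would either invoke from \cite{montgomerybook} or check by hand using the fact that the Reeb direction $X_3$ is recovered as a single Lie bracket of horizontal vector fields. Apart from this observation, the whole argument reduces to solving a linear planar ODE and performing one trigonometric integration, so I do not expect any substantial obstacle.
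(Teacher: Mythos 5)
Your argument is correct, but note that the paper itself does not prove Proposition \ref{propgeodesics}: it is recalled as a known fact, with the standard references (\cite{montgomerybook}, \cite{bellaiche}, \cite{ABB}, \cite{riffordbook}) standing in for exactly the Pontryagin-type derivation you outline. Your computation is the standard one and checks out: $p_z$ is conserved because $H$ is $z$-independent, the functions $h_i=\langle p,X_i\rangle$ satisfy $\dot h_1=-\omega h_2$, $\dot h_2=\omega h_1$ since $\{h_1,h_2\}=\langle p,[X_1,X_2]\rangle=p_z$, and the trigonometric integration of $\dot z=\tfrac12(x\dot y-y\dot x)=\tfrac{1-\cos(\omega t)}{2\omega}$ gives the stated $z(t)$. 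Two points deserve to be made explicit rather than waved at. First, the absence of nontrivial abnormal extremals is immediate here: an abnormal covector must satisfy $h_1=h_2=0$ along the curve, and differentiating gives $\dot h_1=-\dot y\, p_z$, $\dot h_2=\dot x\, p_z$, which for a nonconstant horizontal curve forces $p_z=0$ and hence $p=0$, a contradiction. Second, your converse direction rests on the theorem that small arcs of normal extremal trajectories are length minimizers; this is standard (and matches the paper's local definition of geodesic) but it is a genuine theorem, not a tautology, so it should be cited rather than presented as being true ``by construction.'' With those two references supplied, your proof is complete and is precisely the argument the paper implicitly relies on.
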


\section{Main result}
We study ${\zeta}:]-T,T[\rightarrow\mathbb{H}$ a unitary speed smooth horizontal curve leaving from $(0,0,0)$. By using notations we have previously introduced, we can write :
$$\zeta'(t)=\cos(\theta(t))X_1+\sin(\theta(t))X_2,$$
where $\theta$ is a $\mathcal{C}^{\infty}$ smooth function.
In coordinates this means that :
\[
\left \{
\begin{array}{r c l}
\dot{x}(t) &=& \cos(\theta(t)) \\
\dot{y}(t) &=& \sin(\theta(t)) \\
\dot{z}(t) &=& -\frac{y(t)}{2}\cos(\theta(t))+\frac{x(t)}{2}\sin(\theta(t)),

\end{array}
\right .
\]
where $\zeta(t)=(x(t),y(t),z(t))$.
We have the following properties.
\begin{prop}
Let $x(t)$, $y(t)$ and $z(t)$ be the coordinates of $\zeta(t)$. They are $\mathcal{C}^{\infty}$ smooth functions of $t$ and
\begin{align*}
\dot{z}(0)=\ddot{z}(0)=0.
\end{align*}
Moreover, 
\begin{itemize}
\item \emph{Either} for every integer $i\geqslant 1$, ${\theta}^{(i)}(0)=0$ \emph{and in this case} for all integers $j$, $z^{(j)}(0)=0$ and for every $i$ integer greater or equal to two, ${x}^{(i)}(0)=0$ and ${y}^{(i)}(0)=0$.
\item \emph{Or} there exists an integer $i\geqslant 1$ such that  ${\theta}^{(i)}(0)\neq 0$ \emph{which entails that} for $t>0$ close enough to zero,
$\dot{\theta}(t)$ is non-vanishing and the two following identities hold true :
\begin{align}\label{secondformeqdiffz}
 x^2(t)+y^2(t) &=4\int_0^t\int_0^u\left( -\dot{\theta}(s)\dot{z}(s) +\frac{1}{2} \right)\emph{d}s\emph{d}u,\\
 \dddot{z}(t)&=\ddot{\theta}(t)\int_0^t\left( -\dot{\theta}(s)\dot{z}(s) +\frac{1}{2} \right)\emph{d}s-\dot{\theta}^2(t)\dot{z}(t)+\frac{\dot{\theta}(t)}{2}. \label{equadiffzcasethetanonzero}
\end{align}
\end{itemize}
\label{propregularityeqdiff}
\end{prop}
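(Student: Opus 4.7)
My plan would be to handle the four assertions in order: smoothness, the two initial conditions on $z$, the flat case, and the non-flat case with its two integral identities.

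Smoothness is automatic: since $\theta$ is $\mathcal{C}^\infty$, so are $\dot{x}=\cos\theta$ and $\dot{y}=\sin\theta$, hence $x$ and $y$; and then $\dot{z}=-\tfrac{y}{2}\cos\theta+\tfrac{x}{2}\sin\theta$ is $\mathcal{C}^\infty$, giving $z\in\mathcal{C}^\infty$. The identity $\dot{z}(0)=0$ follows from $x(0)=y(0)=0$. For $\ddot{z}(0)$ I differentiate $\dot{z}$ and substitute $\dot{x}(0)=\cos\theta(0)$, $\dot{y}(0)=\sin\theta(0)$, $x(0)=y(0)=0$; the two surviving terms are $-\tfrac12\sin\theta(0)\cos\theta(0)$ and $+\tfrac12\cos\theta(0)\sin\theta(0)$, which cancel.

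For the flat case (every $\theta^{(i)}(0)=0$, $i\ge 1$), I would invoke Faà di Bruno: each summand in $(\cos\theta)^{(n)}(0)$ or $(\sin\theta)^{(n)}(0)$ with $n\geq 1$ contains at least one factor $\theta^{(k)}(0)$ with $k\geq 1$, so these derivatives vanish. This gives $x^{(i)}(0)=y^{(i)}(0)=0$ for $i\ge 2$. Applying Leibniz's rule to $2\dot{z}=x\sin\theta-y\cos\theta$ at $t=0$ then kills every term except those where both a factor of $x$ or $y$ and a factor of $\sin\theta$ or $\cos\theta$ survive; since $x,y$ contribute only at order $1$ and the trigonometric factors contribute only at order $0$, the only possibly nonzero contribution is at $n=1$, and it reproduces the cancellation seen in $\ddot{z}(0)=0$. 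Combined with $z(0)=0$, this gives $z^{(j)}(0)=0$ for all $j$.

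For the non-flat case, let $i_0\ge 1$ be minimal with $\theta^{(i_0)}(0)\ne 0$. The Taylor expansion of $\dot\theta$ at $0$ reads $\dot\theta(t)=\frac{\theta^{(i_0)}(0)}{(i_0-1)!}\,t^{\,i_0-1}+o(t^{\,i_0-1})$, so $\dot\theta(t)/t^{\,i_0-1}$ tends to a nonzero limit, hence $\dot\theta\neq 0$ on some $(0,\varepsilon)$. For identity~\eqref{secondformeqdiffz}, I compute
\begin{align*}
\tfrac{d^{2}}{dt^{2}}(x^{2}+y^{2}) = 2(\dot{x}^{2}+\dot{y}^{2})+2(x\ddot{x}+y\ddot{y}) = 2+2\dot\theta\,(y\cos\theta-x\sin\theta) = 2-4\dot\theta\,\dot{z},
\end{align*}
using $\ddot{x}=-\sin\theta\,\dot\theta$, $\ddot{y}=\cos\theta\,\dot\theta$ and $2\dot{z}=x\sin\theta-y\cos\theta$. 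Integrating twice against the initial data $x(0)=y(0)=0$ and $(x\dot{x}+y\dot{y})(0)=0$ yields the claim.

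For identity~\eqref{equadiffzcasethetanonzero}, differentiating $\dot{z}$ once gives, after the cancellation $\dot{x}\sin\theta=\sin\theta\cos\theta=\dot{y}\cos\theta$,
\begin{align*}
\ddot{z}(t)=\tfrac{\dot\theta(t)}{2}\,u(t), \qquad u:=x\cos\theta+y\sin\theta.
\end{align*}
A direct computation of $\dot{u}$ yields $\dot{u}=1-2\dot\theta\,\dot{z}$, and since $u(0)=0$, integration gives $u(t)/2=\int_0^t\bigl(\tfrac12-\dot\theta(s)\dot{z}(s)\bigr)\,ds$. Differentiating $\ddot{z}=\tfrac{\dot\theta}{2}u$ once more and substituting these formulas produces exactly~\eqref{equadiffzcasethetanonzero}. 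The steps are all short; the real obstacle is only bookkeeping — assembling many pieces — and keeping track of the sign $2\dot{z}=x\sin\theta-y\cos\theta$, which is the single algebraic identity that makes everything line up.
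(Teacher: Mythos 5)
Your proof is correct, and every individual computation can be checked against the relations $\ddot x=-\dot\theta\dot y$, $\ddot y=\dot\theta\dot x$, $2\dot z=x\dot y-y\dot x$ that also drive the paper's argument. The substantive difference is organizational, and it is in your favor: the paper first derives the single third-order equation $\dot\theta\,\dddot z=\ddot\theta\,\ddot z-\dot\theta^{3}\dot z+\tfrac{\dot\theta^{2}}{2}$ and then must \emph{divide} by $\dot\theta$ (resp.\ $\dot\theta^{2}$) to extract \eqref{secondformeqdiffz} and \eqref{equadiffzcasethetanonzero}, recognizing $\ddot z/\dot\theta$ as $\tfrac14\tfrac{d}{dt}(x^2+y^2)$ and fixing the integration constant by the limit $\ddot z/\dot\theta\to 0$; this is why the paper states the identities only in the non-flat case and needs a separate remark to observe that the resulting Taylor expansions survive in the flat case. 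You instead work with $u=x\dot x+y\dot y$ (which is exactly the paper's $\ddot z/\dot\theta$, but never written as a quotient), prove $\dot u=1-2\dot\theta\dot z$ and $\ddot z=\tfrac{\dot\theta}{2}u$ directly, and obtain both identities by integration and differentiation with no division by $\dot\theta$ anywhere. As a result your derivation establishes \eqref{secondformeqdiffz} and \eqref{equadiffzcasethetanonzero} for all small $t$ regardless of whether $\theta$ is flat at $0$, which slightly strengthens the statement and makes the paper's later remark about the validity of the expansions in the flat case unnecessary. The remaining pieces (smoothness, $\dot z(0)=\ddot z(0)=0$, the Fa\`a di Bruno/Leibniz argument in the flat case, and the order-of-vanishing argument giving $\dot\theta\neq 0$ on a right neighbourhood of $0$) match the paper's proof in substance.
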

\begin{proof}
First let us notice that $x(0)=y(0)=z(0)=0$ since $\zeta$ leaves from $(0,0,0)$ by definition. The smoothness of $x$ and $y$ with respect to time comes from the fact that $\theta$ is $\mathcal{C}^{\infty}$ smooth and that
\begin{align*}
 x(t)=\int_{0}^t \cos(\theta(s))\text{d}s\qquad\text{ and }\qquad y(t)=\int_{0}^t \sin(\theta(s))\text{d}s.
\end{align*}
The coordinate $z(t)$ is also a smooth function of $t$ according to its expression
\begin{align*}
 z(t)&=\int_{0}^t  -\frac{y(s)}{2}\cos(\theta(s))+\frac{x(s)}{2}\sin(\theta(s)) \text{d}s.
\end{align*}
Then we show that $z$ satisfies a differential equation. 

We start by writing :
\begin{align}\label{expressiondz}
\dot{z}=\frac{x\dot{y}-y\dot{x}}{2} \text{,\qquad}\ddot{x}=-\dot{\theta}\dot{y} \text{,\qquad}\ddot{y}=\dot{\theta}\dot{x}.
\end{align}
In particular, $\dot{z}(0)=0$ and if we differentiate $z$ once more
\begin{align}\label{expressiond2z}
\ddot{z}=\frac{\dot{\theta}\left(x\dot{x}+y\dot{y} \right)}{2}.
\end{align}
This implies that $\ddot{z}(0)=0$ and if we go further in the differentiation
\begin{align*}
\dddot{z}&=\frac{\ddot{\theta}\left(x\dot{x}+y\dot{y} \right)+\dot{\theta}(\overbrace{\dot{x}^2+\dot{y}^2}^{=1}) +\dot{\theta}^2(-x\dot{y}+y\dot{x})}{2}.
\end{align*}
If we multiply this last identity by $\dot{\theta}$ and combine it with (\ref{expressiondz}) and (\ref{expressiond2z}), we obtain
\begin{align}\label{equadiffz}
\dot{\theta}\dddot{z}&=\ddot{\theta}\ddot{z}-\dot{\theta}^3\dot{z}+\frac{\dot{\theta}^2}{2}.
\end{align}

Moreover, (\ref{expressiondz}) and (\ref{expressiond2z}) allow us to assert that if for every integer $i$ greater or equal to one ${\theta}^{(i)}(0)=0$, then for every $i$ integer greater or equal to two, ${x}^{(i)}(0)=0$, ${y}^{(i)}(0)=0$ and ${z}^{(i)}(0)=0$.

On the other hand if we consider $\theta$ such that there exists an integer $i\geqslant 1$ that satisfies $\theta^{(i)}(0)\neq 0$ then for $t>0$ close enough to zero,
$\dot{\theta}(t)$ is non-vanishing
and for such $t$ we can divide the
differential equation (\ref{equadiffz}) by $\dot{\theta}^2(t)$ and find out that :
\begin{align*}
\frac{\partial}{\partial t}\left( \frac{\ddot{z}(t)}{\dot{\theta}(t)} \right)&=-\dot{\theta}(t)\dot{z}(t) +\frac{1}{2}. 
\end{align*}
Therefore the difference between $\frac{\ddot{z}(t)}{\dot{\theta}(t)}$ and $\int_0^t\left( -\dot{\theta}(s)\dot{z}(s) +\frac{1}{2} \right)\text{d}s$ is a constant. But since (\ref{expressiond2z}) holds we know that
\begin{align*}
 \frac{\ddot{z}(t)}{\dot{\theta}(t)}&=\frac{x(t)\dot{x}(t)+y(t)\dot{y}(t) }{2}\stackrel{t\rightarrow 0}{\longrightarrow}0.
\end{align*}
So that for $t>0$ small enough such that $\dot{\theta}(t)\neq 0$ :
\begin{align}
 \frac{\ddot{z}(t)}{\dot{\theta}(t)}&=\int_0^t\left( -\dot{\theta}(s)\dot{z}(s) +\frac{1}{2} \right)\text{d}s.\label{expressionddzdtheta}
\end{align}
But through (\ref{expressiond2z}), we are able to find a second expression for $\frac{\ddot{z}(t)}{\dot{\theta}(t)}$ :
\begin{align*}
 \frac{\ddot{z}(t)}{\dot{\theta}(t)}&=\frac{1}{4}\frac{\partial}{\partial t}\left( x^2(t) +y^2(t) \right).
\end{align*}
As a consequence of the two previous formula, for $t>0$ small enough
\begin{align*}
 \frac{\partial}{\partial t}\left( x^2(t) +y^2(t) \right)&=4\int_0^t\left( -\dot{\theta}(s)\dot{z}(s) +\frac{1}{2} \right)\text{d}s.
\end{align*}
The fact that $\frac{\partial}{\partial t}\left(x^2(t)+y^2(t)\right)$ is continuous and that $x^2(0)+y^2(0)=0$ is sufficient to be sure that for $t$ small enough
\begin{align*}
  x^2(t)+y^2(t) &=4\int_0^t\int_0^u\left( -\dot{\theta}(s)\dot{z}(s) +\frac{1}{2} \right)\text{d}s\text{d}u.
\end{align*}

Finally, still in the case where there exists an integer $i\geqslant 1$ such that $\theta^{(i)}(0)\neq 0$,
we consider $t>0$ small enough to have $\dot{\theta}(t)\neq 0$ and we divide the differential
equation \eqref{equadiffz}  we have already established by $\dot{\theta}(t)$ :
\begin{align*}
\dddot{z}(t)&=\ddot{\theta}(t)\frac{\ddot{z}(t)}{\dot{\theta}(t)}-\dot{\theta}^2(t)\dot{z}(t)+\frac{\dot{\theta}(t)}{2}.
\end{align*}
Then we replace $\frac{\ddot{z}(t)}{\dot{\theta}(t)}$ using \eqref{expressionddzdtheta} and we find out that :
\begin{align*}
\dddot{z}(t)&=\ddot{\theta}(t)\int_0^t\left( -\dot{\theta}(s)\dot{z}(s) +\frac{1}{2} \right)\text{d}s-\dot{\theta}^2(t)\dot{z}(t)+\frac{\dot{\theta}(t)}{2}. 
\end{align*}
\end{proof}

\subsection{Proof of Theorem \ref{TheTheorem}}
We recall that in the Introduction, we stated as Theorem \ref{TheTheorem}, that if $\zeta$ is a smooth horizontal curve parametrized by arc length in the Heisenberg group then
\begin{align*}
d_{\mathbb{H}}^2(\zeta(0),\zeta(t))=t^2-\frac{\begin{pmatrix}
k_{\zeta}(0)
\end{pmatrix}^2}{720}t^6+\mathcal{O}(t^7).
\end{align*}
\begin{proof}
 We know from \cite[Chapter 5, section 5.7. about the Heisenberg group]{AgrachevBarilariRizzi13} that the squared distance between $\zeta(t)$ and the origin, which is also $\zeta(0)$ can be expressed as
\begin{align}\label{expressiondist}
 d_{\mathbb{H}}^2(\zeta(0),\zeta(t))&=\frac{x^2(t)+y^2(t)}{\text{sinc}^2\circ\phi\left( \frac{z(t)}{x^2(t)+y^2(t)} \right)},
\end{align}
where $\phi$ is the inverse function of
\[
 \begin{array}{rccl}
  \psi :&[-\pi,\pi]&\longrightarrow & \mathbb{R}\\
& u &\longmapsto &\frac{1}{4}\left( \frac{u}{\sin^2(u)}-\cot(u) \right).
 \end{array}
\]
We notice that we can rewrite
\begin{align*}
 \psi(u)&=\frac{2u-\sin(2u)}{4(1-\cos(2u))}.
\end{align*}
Then we check that
\begin{align*}
 \psi(u)&=\frac{u}{6}+\frac{u^3}{45}+\mathcal{O}\left( u^5 \right).
\end{align*}
And since $\psi$ is odd and analytic, so is $\phi=\psi^{-1}$ and
\begin{align*}
 \phi(u)&=6u +\alpha u^3 +\mathcal{O}\left( u^5 \right).
\end{align*}
Now
\begin{align*}
 u&=\psi\circ\phi(u)=u +\left(\frac{24}{5} + \frac{\alpha}{6} \right)u^3+\mathcal{O}\left( u^5 \right),
\end{align*}
so $\alpha =-\frac{144}{5}$ and
\begin{align*}
 \phi(u)&=6u  -\frac{144}{5}u^3 +\mathcal{O}\left( u^5 \right).
\end{align*}
We recall that sinc, the cardinal sine function is defined as the entire function such that sinc$(x)=\frac{\sin(x)}{x}$ for all $x$ different from $0$, which implies that
\begin{align*}
\text{sinc}(u)=1-\frac{u^2}{6}+\frac{u^4}{120}+\mathcal{O}\left(u^6\right).
\end{align*}
We are then able to compute
\begin{align}\label{Taylorsinccircphi}
 \frac{1}{\text{sinc}^2\circ\phi(u)}&=1+12u^2-\frac{144}{5}u^4+\mathcal{O}\left(u^5\right).
\end{align}

Now we will need to know the Taylor expansion of $z$ at time zero. We are interested only in the case where there exists an integer $i\geqslant 1$ such that $\theta^{(i)}(0)\neq 0$.
Indeed, 
in the other case, we have already noticed in Proposition \ref{propregularityeqdiff} that for all integers $i$, $z^{(i)}(0)=0$. 
First, by Proposition \ref{propregularityeqdiff}, we have that $z(0)=\dot{z}(0)=\ddot{z}(0)=0$. Then we write (\ref{equadiffzcasethetanonzero})
\begin{align*}
\dddot{z}&=\ddot{\theta}\int_0^t\left( -\dot{\theta}(s)\dot{z}(s) +\frac{1}{2} \right)\text{d}s-\dot{\theta}^2\dot{z}+\frac{\dot{\theta}}{2}.
\end{align*}
We evaluate this identity at zero and find out that :
\begin{align}
\dddot{z}(0)&=\frac{\dot{\theta}(0)}{2}.\label{d3z}
\end{align}
Then we differentiate (\ref{equadiffzcasethetanonzero}) and evaluate the identity we find at zero to obtain
\begin{align}
z^{(4)}(0)&=\ddot{\theta}(0).\label{d4z}
\end{align}
Similarly when we differentiate (\ref{equadiffzcasethetanonzero}) twice and look at what we find at $t=0$ we get
\begin{align}
z^{(5)}(0)&=\frac{3\theta^{(3)}(0)}{2}-\frac{\dot{\theta}^3(0)}{2}.\label{d5z}
\end{align}
These formula for the first differentials of $z$ at zero entail that
\begin{align}\label{expansionz}
z(t)&=\frac{\dot{\theta}(0)}{12}t^3 +\frac{\ddot{\theta}(0)}{24}t^4 +\left( \frac{\theta^{(3)}(0)}{80}-\frac{\dot{\theta}^3(0)}{240} \right)t^5 +\mathcal{O}(t^6).
\end{align}

A last ingredient we will need in order to complete the proof is the expression of the first differentials of $x^2+y^2$ at zero. In order to find these differentials, we use Proposition \ref{propregularityeqdiff} :
\begin{align*}
{x^2(t)+y^2(t)}  &=4\int_0^t\int_0^u\left( -\dot{\theta}(s)\dot{z}(s) +\frac{1}{2} \right)\text{d}s\text{d}u.
\end{align*}
This identity enables us to compute the derivatives of $x^2(t)+y^2(t)$ which we postpone to appendix \ref{derivdistance} and we obtain :
\begin{align}
 {x^2(t)+y^2(t)}  &={t^2}-\frac{\dot{\theta}^2(0)}{12}t^4-\frac{\dot{\theta}(0)\ddot{\theta}(0)}{12}t^5 \nonumber\\
&+\left(-\frac{\dot{\theta}(0)\dddot{\theta}(0)}{40}-\frac{\ddot{\theta}^2(0)}{45}+\frac{\dot{\theta}^4(0)}{360}  \right)t^6 +\mathcal{O}(t^7). \label{expressionrcarre}
\end{align}
\begin{rem}
The expansions that are given by (\ref{expansionz}) and (\ref{expressionrcarre}) are still valid in the case where
for all integers $i\geqslant 1$, $\theta^{(i)}(0)=0$, according to the first point in Proposition \ref{propregularityeqdiff}.
\end{rem}

Now we can combine (\ref{expressiondist}), (\ref{Taylorsinccircphi}), (\ref{expansionz}) and (\ref{expressionrcarre}) to obtain
\begin{align*}
d_{\mathbb{H}}^2(\zeta(0),\zeta(t))=t^2-\frac{\begin{pmatrix}
\ddot{\theta}^2(0)
\end{pmatrix}^2}{720}t^6+\mathcal{O}(t^7).
\end{align*}
\end{proof}

\section{Proof of Proposition \ref{hcharacteristicandgeodesic}}
In the introduction, we stated in Proposition \ref{hcharacteristicandgeodesic} that for $\zeta$ a horizontal curve parametrized by arc length, the function $h_{\zeta}$ characterizes $\zeta$ up to isometry and that its derivative is identically zero if and only if $\zeta$ is a geodesic. This was already noticed in \cite{ChiuLai13} but let us give a quick proof. We need two definitions and a lemma :
\begin{de} For $u\in\mathbb{H}$ we define the left translation by $u$ :
\[
\begin{array}{rccc}
L_{u} :& \mathbb{H}& \longrightarrow & \mathbb{H}\\
& v &\longmapsto & u*v.
\end{array}
\]
We also define for any real number $\alpha$, $R_{\alpha}$ the rotation around the $z$-axis on $\mathbb{H}$, namely
\begin{align*}
 R_{\alpha}=
 \begin{pmatrix}
           \cos(\alpha)& \sin(\alpha) & 0 \\
           -\sin(\alpha)& \cos(\alpha) &0\\
           0& 0 &1 
           \end{pmatrix}.
\end{align*}

\end{de}
Both of the previous definitions are relevant because of the following lemma.
\begin{lem}\label{lemmaactionRL} For any real number $\alpha$, $R_{\alpha}$ is an isometry that preserves $(X_1,X_2,X_3)$. Moreover, if $\zeta:]-T,T[\rightarrow\mathbb{H}$ is a smooth horizontal curve parametrized by arc lenght and $u\in\mathbb{H}$,
\begin{align*}
h_{\zeta}=h_{R_{\alpha}\circ\zeta} \text{ and }h_{\zeta}=h_{L_u \circ \zeta}.
\end{align*}
\end{lem}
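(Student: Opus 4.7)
The statement splits into three checks: first that each $R_\alpha$ is a sub-Riemannian isometry that ``preserves'' the frame $(X_1,X_2,X_3)$, and then the invariance of $h$ under $R_\alpha$ and under $L_u$. I would treat them in that order, because the invariance of $h$ is defined via the $x$- and $y$-coordinates of the curve and amounts, in both cases, to a short calculation once one writes down the image curve in coordinates.

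The plan for the isometry claim is to compute $(R_\alpha)_* X_i$ point by point. Write $R_\alpha(x,y,z)=(X,Y,Z)$ with $X=\cos(\alpha)x+\sin(\alpha)y$, $Y=-\sin(\alpha)x+\cos(\alpha)y$, $Z=z$. Using $(R_\alpha)_*\partial_x=\cos(\alpha)\partial_X-\sin(\alpha)\partial_Y$, $(R_\alpha)_*\partial_y=\sin(\alpha)\partial_X+\cos(\alpha)\partial_Y$, $(R_\alpha)_*\partial_z=\partial_Z$, one checks directly that
\[
(R_\alpha)_*X_1\big|_{p}=\cos(\alpha)X_1\big|_{R_\alpha(p)}-\sin(\alpha)X_2\big|_{R_\alpha(p)},\qquad (R_\alpha)_*X_2\big|_{p}=\sin(\alpha)X_1\big|_{R_\alpha(p)}+\cos(\alpha)X_2\big|_{R_\alpha(p)},
\]
the only slightly delicate point being that the $-\tfrac{y}{2}\partial_z$ and $\tfrac{x}{2}\partial_z$ twists recombine precisely into $-\tfrac{Y}{2}\partial_Z$ and $\tfrac{X}{2}\partial_Z$ at the image point. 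Together with $(R_\alpha)_*X_3=X_3$, this shows that $R_\alpha$ sends the orthonormal frame of the horizontal distribution to an orthonormal rotation of itself, so it preserves the distribution and the induced metric; hence it is an isometry of $\mathbb{H}$ and preserves the frame $(X_1,X_2,X_3)$ in the stated sense.

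For $h_{L_u\circ\zeta}=h_\zeta$, expand the group law \eqref{multiplication}: writing $u=(u_1,u_2,u_3)$ and $\zeta(t)=(x(t),y(t),z(t))$, the first two coordinates of $L_u\circ\zeta$ are $u_1+x(t)$ and $u_2+y(t)$. The additive constants vanish after one differentiation, so $\dot x,\ddot x,\dot y,\ddot y$ are unchanged and \eqref{defintionh} gives immediately $h_{L_u\circ\zeta}=h_\zeta$. Note that $L_u\circ\zeta$ is still a smooth horizontal unit-speed curve because $L_u$ is a sub-Riemannian isometry by left-invariance of the structure, which was recalled in the previous section.

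For $h_{R_\alpha\circ\zeta}=h_\zeta$, the first two coordinates of $R_\alpha\circ\zeta$ are $\tilde x=\cos(\alpha)x+\sin(\alpha)y$ and $\tilde y=-\sin(\alpha)x+\cos(\alpha)y$. Then
\[
\dot{\tilde x}\ddot{\tilde y}-\dot{\tilde y}\ddot{\tilde x}=\bigl(\cos^2(\alpha)+\sin^2(\alpha)\bigr)\bigl(\dot x\ddot y-\dot y\ddot x\bigr),
\]
the $\dot x\ddot x$ and $\dot y\ddot y$ cross-terms cancelling pairwise; this is the only ``main'' computation and it is routine. Again $R_\alpha\circ\zeta$ is horizontal and parametrized by arc length thanks to the first part of the lemma, so the definition of $h$ applies. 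I do not anticipate a real obstacle: the only thing to be careful about is the algebraic matching of the twist in the definition of $X_1,X_2$ with the action of $R_\alpha$ on $\partial_z$, which is what makes $R_\alpha$ a sub-Riemannian isometry in the first place.
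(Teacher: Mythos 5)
Your proposal is correct and follows essentially the same route as the paper: an explicit coordinate computation showing that $R_{\alpha}$ carries the frame $(X_1,X_2,X_3)$ to a rotation of itself (hence is an isometry), and the observation that $L_u$ only shifts the $x$- and $y$-coordinates of the curve by constants, so that \eqref{defintionh} is unchanged. The only divergence is that you verify $h_{R_{\alpha}\circ\zeta}=h_{\zeta}$ directly from the determinant formula \eqref{defintionh}, whereas the paper deduces it from the angle formula \eqref{expressionhtheta}; both are one-line checks, and yours has the minor advantage of not having to note that the angle $\theta$ is actually shifted by the constant $\alpha$ (which leaves $\dot{\theta}$, and hence $h$, unchanged).
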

\begin{proof}
The fact that $R_{\alpha}$ preserves $(X_1,X_2,X_3)$ comes from a computation in coordinates. Since $R_{\alpha}$ preserves $(X_1,X_2,X_3)$, it is an isometry. Moreover, the fact that $R_{\alpha}$ preserves $(X_1,X_2,X_3)$ means that the angle $\theta(t)$ between $\zeta'(t)$ and $X_1$ is left invariant by $R_{\alpha}$ so by using \eqref{expressionhtheta}, we obtain 
\begin{align*}
h_{\zeta}=h_{R_{\alpha}\circ\zeta}.
\end{align*}
To get $h_{\zeta}=h_{L_u \circ \zeta}$ we combine \eqref{defintionh} and \eqref{multiplication}. 
\end{proof}

We are now ready to prove Proposition \ref{hcharacteristicandgeodesic}.
\begin{proof}[Proof of Proposition \ref{hcharacteristicandgeodesic}]
Let us assume that $\zeta_1$ and $\zeta_2$ are two smooth horizontal curves parametrized by arc length that are defined on a same time interval and that for times $t$ in their domain,
\begin{align*}
h_{\zeta_1}(t)=h_{\zeta_2}(t)
\end{align*}
There exists an angle $\alpha_0$ such that $L_{{\zeta_1}(0)^{-1}}\circ {\zeta_1}$ and $R_{\alpha_0}\circ L_{{\zeta_2}(0)^{-1}}\circ {\zeta_2}$ are two curves that start at the origin with the same initial velocity
\begin{align*}
\cos\left(\theta_0\right)X_1 +\sin\left(\theta_0\right)X_2.
\end{align*}
But according to \eqref{expressionhtheta}, the velocity of $L_{{\zeta_1}(0)^{-1}}\circ\zeta_1$ at time $t$ is :
\begin{align*}
&\cos\left(\theta_0 +\int_0^t h_{L_{{\zeta_1}(0)^{-1}}\circ\zeta_1}\text{d}t\right)X_1 +\sin\left(\theta_0 +\int_0^t h_{L_{{\zeta_1}(0)^{-1}}\circ\zeta_1}\text{d}t\right)X_2\\
&\quad =\cos\left(\theta_0 +\int_0^t h_{\zeta_1}\text{d}t\right)X_1 +\sin\left(\theta_0 +\int_0^t h_{\zeta_1}\text{d}t\right)X_2,\text{ by Lemma \ref{lemmaactionRL}.}\\
&\quad =\cos\left(\theta_0 +\int_0^t h_{\zeta_2}\text{d}t\right)X_1 +\sin\left(\theta_0 +\int_0^t h_{\zeta_2}\text{d}t\right)X_2\\
&\quad =\cos\left(\theta_0 +\int_0^t h_{R_{\alpha_0}\circ L_{{\zeta_2}(0)^{-1}}\circ {\zeta_2}}\text{d}t\right)X_1\\
&\quad\quad +\sin\left(\theta_0 +\int_0^t h_{R_{\alpha_0}\circ L_{{\zeta_2}(0)^{-1}}\circ {\zeta_2}}\text{d}t\right)X_2, \text{ thanks to Lemma \ref{lemmaactionRL}.}
\end{align*}
By using the identity \eqref{expressionhtheta}, we notice that this last vector is equal to the velocity of $R_{\alpha_0}\circ L_{{\zeta_2}(0)^{-1}}\circ {\zeta_2}$ at time $t$.

Since $L_{{\zeta_1}^{-1}(0)}\circ {\zeta_1}$ and $R_{\alpha_0}\circ L_{{\zeta_2}^{-1}(0)}\circ {\zeta_2}$ both start at the origin and are integral lines of the same vector fields they are in fact the same curve and it follows that $\zeta_1$ and $\zeta_2$ are equal up to an isometry.

Furthermore, we know by Proposition \ref{propgeodesics} that a smooth horizontal curve $\zeta$ that is parametrized by arc length and leaves from the origin at time zero is a geodesic if and only the angle $\theta$ it forms with $(X_1,X_2)$ is an affine function of time, which means that $h_{\zeta}$ is a constant. Now for $\zeta$ leaving from any point,
\begin{align*}
\zeta \text{ is a geodesic } & \text{ if and only if } L_{{\zeta}^{-1}(0)}\circ {\zeta} \text{ is a geodesic,}\\
&\text{ if and only if } h_{L_{{\zeta}^{-1}(0)}\circ {\zeta}} \text{ is constant,}\\
&\text{ if and only if } h_{ {\zeta}} \text{ is constant.}\\
\end{align*}

\end{proof}

\section{Final remarks}
At this point, we intend to emphasize the fact that our interpretation of the geodesic curvature of a curve is linked to the curvature defined in \cite{BaloghTysonVecchi16} as the common Riemannian curvature of the curve in spaces that tend to the Heisenberg space. 

More explicitly, for $\varepsilon>0$, we consider the $\varepsilon-$Riemannian structures on the Heisenberg group such that $(X_1,X_2,\varepsilon X_3)$ is an orthonormal frame, where we recall that the vector fields $X_i$ are defined in \eqref{TheOrthoFrame} and \eqref{TheReebField}.

We denote by $g_{\varepsilon}$ the metric on the $\varepsilon$-Riemannian structure, by $\Vert . \Vert_{\varepsilon}$ its norm, by $d_{\varepsilon}(.,.)$ the distance function on this structure and by $\nabla^{\varepsilon}$ the associated Levi-Civita connection.
These $\varepsilon-$Riemannian structures converge in the pointed Gromov-Haussdorff sense to the Heisenberg sub-Riemannian structure as $\varepsilon$ goes to zero (see for example \cite{bellaiche}).

Corollary \ref{hepsiloncurvature} tells us that the Riemmannian curvature of $\zeta$ in these various $\varepsilon-$Riemannian structures does not depend on $\varepsilon$. We prove this result here and another proof of this fact is contained in \cite{BaloghTysonVecchi16}. In that same paper, they choose to call this common Riemannian curvature the sub-Riemannian curvature of the curve.
In our own vocabulary, this corresponds to the characteristic deviation.
\begin{prop} The following identities are satisfied :
\begin{align*}
\nabla^{\varepsilon}_{X_1} X_1 =0,\qquad\nabla^{\varepsilon}_{X_2} X_2 =0,\qquad\nabla^{\varepsilon}_{X_1} X_2 = \frac{X_3}{2}=-\nabla^{\varepsilon}_{X_2} X_1.
\end{align*}\label{constantepsiloncurvature} 
\end{prop}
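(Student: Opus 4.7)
The plan is to apply the Koszul formula for the Levi-Civita connection $\nabla^{\varepsilon}$, exploiting the fact that $(X_1, X_2, \varepsilon X_3)$ is a left-invariant orthonormal frame. The key observation is that, for any $i,j \in \{1,2,3\}$, the function $g_{\varepsilon}(X_i, X_j)$ is constant on $\mathbb{H}$: indeed, $g_{\varepsilon}(X_1, X_1) = g_{\varepsilon}(X_2, X_2) = 1$, $g_{\varepsilon}(X_3, X_3) = 1/\varepsilon^2$, and the cross terms vanish. Therefore all derivative terms $Xg_{\varepsilon}(Y,Z)$ in Koszul's formula disappear when $X, Y, Z$ are chosen among $X_1, X_2, X_3$, and the formula reduces to
\begin{align*}
2\, g_{\varepsilon}(\nabla^{\varepsilon}_X Y, Z) = g_{\varepsilon}([X,Y], Z) - g_{\varepsilon}([X,Z], Y) - g_{\varepsilon}([Y,Z], X).
\end{align*}

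The next step is to compute the Lie brackets. A direct calculation in coordinates with the explicit expressions of $X_1, X_2, X_3$ gives
\begin{align*}
[X_1, X_2] = X_3, \qquad [X_1, X_3] = 0, \qquad [X_2, X_3] = 0.
\end{align*}
The first bracket uses the fact that the coefficients $\pm x/2$ and $\mp y/2$ in front of $\partial_z$ produce a $\partial_z$ term upon differentiation, while the other two brackets vanish because the coefficients of $X_1, X_2$ do not depend on $z$.

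With these ingredients, each identity becomes a short verification. For $\nabla^{\varepsilon}_{X_1} X_1$: plugging $X = Y = X_1$ into the reduced Koszul formula shows that $g_{\varepsilon}(\nabla^{\varepsilon}_{X_1}X_1, Z) = -g_{\varepsilon}([X_1, Z], X_1)$, and testing $Z = X_1, X_2, X_3$ gives zero in each case because $[X_1, X_2] = X_3$ is $g_{\varepsilon}$-orthogonal to $X_1$ and the other brackets vanish; symmetrically $\nabla^{\varepsilon}_{X_2} X_2 = 0$. For $\nabla^{\varepsilon}_{X_1} X_2$: the only nontrivial case is $Z = X_3$, which yields $2 g_{\varepsilon}(\nabla^{\varepsilon}_{X_1} X_2, X_3) = g_{\varepsilon}(X_3, X_3) = 1/\varepsilon^2$; expanding $\nabla^{\varepsilon}_{X_1} X_2 = a X_1 + b X_2 + c X_3$ and using $g_{\varepsilon}(X_3, X_3) = 1/\varepsilon^2$ forces $a = b = 0$ and $c = 1/2$, hence $\nabla^{\varepsilon}_{X_1} X_2 = X_3/2$. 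The final identity $\nabla^{\varepsilon}_{X_2} X_1 = -X_3/2$ then follows immediately from the torsion-free property $\nabla^{\varepsilon}_{X_1} X_2 - \nabla^{\varepsilon}_{X_2} X_1 = [X_1, X_2] = X_3$.

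There is no real obstacle in this proof; it is essentially a bookkeeping exercise. The only point that requires a little care is the factor $1/\varepsilon^2$ coming from $g_{\varepsilon}(X_3, X_3)$, which appears on both sides of the equation $2 g_{\varepsilon}(\nabla^{\varepsilon}_{X_1} X_2, X_3) = 1/\varepsilon^2$ and therefore cancels out, explaining why the final answer $X_3/2$ is independent of $\varepsilon$ — this is precisely the $\varepsilon$-independence highlighted in the discussion preceding the proposition.
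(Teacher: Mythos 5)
Your proof is correct and follows essentially the same route as the paper: both reduce the Koszul formula to the bracket terms using the constancy of the metric coefficients on the left-invariant frame, use $[X_1,X_2]=X_3$ and $[X_i,X_3]=0$ to evaluate each pairing, and finish with torsion-freedom for $\nabla^{\varepsilon}_{X_2}X_1$. The only cosmetic difference is that you test against $X_3$ (getting $1/\varepsilon^2$ on both sides) while the paper tests against the unit vector $\varepsilon X_3$ (getting $1/\varepsilon$); the outcome is identical.
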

Before we prove this proposition we give the corollary we mentioned a few lines ago :
\begin{cor}\label{hepsiloncurvature} If $\zeta:]-T,T[\rightarrow\mathbb{H}$ is a smooth horizontal curve parametrized by arc length that forms at time $t$ an angle $\theta(t)$ with $X_1$ then
\begin{align*}
\nabla^{\varepsilon}_{\zeta'(t)}\zeta' =h_{\zeta}(t)\left( -\sin(\theta(t))X_1\left( \zeta(t) \right) +\cos(\theta(t))X_2\left( \zeta(t) \right) \right).
\end{align*}
In particular,
\begin{align*}
\Vert \nabla^{\varepsilon}_{\zeta'(t)}\zeta' \Vert_{\varepsilon} =|h_{\zeta}(t)|.
\end{align*}\label{curvatureepsilonspaces}
\end{cor}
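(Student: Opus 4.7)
The plan is to compute $\nabla^{\varepsilon}_{\zeta'(t)}\zeta'$ directly using the Leibniz rule and the covariant derivative identities supplied by Proposition \ref{constantepsiloncurvature}. Since $\zeta$ is horizontal and parametrized by arc length, we may write
\[
\zeta'(t)=\cos(\theta(t))\,X_1(\zeta(t))+\sin(\theta(t))\,X_2(\zeta(t)),
\]
so everything reduces to expanding $\nabla^{\varepsilon}_{\zeta'}(\cos\theta\,X_1+\sin\theta\,X_2)$ along the curve.

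First I would use the Leibniz rule to split this into four terms: two terms where $\zeta'$ acts on the scalar coefficients $\cos\theta$ and $\sin\theta$, producing $-\dot\theta\sin\theta\,X_1+\dot\theta\cos\theta\,X_2$, and two terms $\cos\theta\,\nabla^{\varepsilon}_{\zeta'}X_1+\sin\theta\,\nabla^{\varepsilon}_{\zeta'}X_2$ where the connection acts on the basis fields. For the latter, I would expand $\nabla^{\varepsilon}_{\zeta'}X_i$ again by horizontal linearity and plug in the formulas from Proposition \ref{constantepsiloncurvature}. A short computation gives $\nabla^{\varepsilon}_{\zeta'}X_1=-\tfrac12\sin\theta\,X_3$ and $\nabla^{\varepsilon}_{\zeta'}X_2=\tfrac12\cos\theta\,X_3$, and the key observation is that once these are weighted by $\cos\theta$ and $\sin\theta$ respectively and summed, the $X_3$ contributions cancel exactly.

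What remains is $\dot\theta\bigl(-\sin\theta\,X_1+\cos\theta\,X_2\bigr)$, and since $h_\zeta(t)=\dot\theta(t)$ by \eqref{expressionhtheta}, this is precisely the claimed formula. For the norm statement, I would simply note that $(X_1,X_2)$ remains orthonormal for $g_\varepsilon$ (only $X_3$ is rescaled), so $-\sin\theta\,X_1+\cos\theta\,X_2$ has unit $g_\varepsilon$-norm, whence $\Vert\nabla^{\varepsilon}_{\zeta'}\zeta'\Vert_{\varepsilon}=|h_\zeta(t)|$.

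There is no real obstacle here — the only thing to be careful about is not dropping the $X_3$-terms prematurely and checking that they cancel rather than having to be killed by hand; this cancellation is what makes the result independent of $\varepsilon$ and hence geometrically meaningful as an intrinsic object of the sub-Riemannian structure.
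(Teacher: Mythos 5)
Your computation is correct and is exactly the argument the paper intends: the corollary is stated as an immediate consequence of Proposition \ref{constantepsiloncurvature}, obtained by the Leibniz rule applied to $\zeta'=\cos\theta\,X_1+\sin\theta\,X_2$, with the $X_3$-contributions cancelling as you observe. Your remark that $(X_1,X_2)$ stays $g_\varepsilon$-orthonormal, so the norm is $|h_\zeta(t)|$ independently of $\varepsilon$, is also the right justification for the final claim.
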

\begin{proof}[Proof of Proposition \ref{constantepsiloncurvature}]
We recall that for $X$, $Y$ and $Z$ three vector fields :
\begin{align*}
 2 g\left(\nabla_X Y,Z\right)=&X g\left(Y,Z \right) +Y g\left( Z,X \right) -Z g\left( X,Y \right)\\
& +g\left( \left[ X,Y\right], Z\right) +g\left( \left[ Z,X\right], Y\right)-g\left( \left[ Y,Z\right], X\right).
\end{align*}
Now if we consider the $\varepsilon$-Riemannian structure and we choose $X$, $Y$, and $Z$ among $X_1$, $X_2$ and $\varepsilon X_3$, this Koszul identity is reduced to 
\begin{align*}
 2 g_{\varepsilon}\left(\nabla^{\varepsilon}_X Y,Z\right)= g_{\varepsilon}\left( \left[ X,Y\right], Z\right) +g_{\varepsilon}\left( \left[ Z,X\right], Y\right)-g_{\varepsilon}\left( \left[ Y,Z\right], X\right).
\end{align*}
Moreover, we notice that in the case where $Y=Z$ we simply get 
\begin{align*}
 g_{\varepsilon}\left(\nabla^{\varepsilon}_X Y,Y\right)&=0.
\end{align*}
Another usefull remark is that for all $i$, $\left[X_i,\varepsilon X_3\right]=\varepsilon\left[X_i, X_3\right] =0$.\\
This allows us to write 
\begin{align*}
 g_{\varepsilon}\left(\nabla^{\varepsilon}_{X_1} X_1,X_1\right)=g_{\varepsilon}\left(\nabla^{\varepsilon}_{X_1} X_1,\varepsilon X_3\right) =0,
\end{align*}
and
\begin{align*}
 g_{\varepsilon}\left(\nabla^{\varepsilon}_{X_1} X_1,X_2\right)&=-g_{\varepsilon}\left( \left[ X_1,X_2 \right], X_1 \right)\\
 &=-g_{\varepsilon}\left(X_3, X_1 \right)\\
 &=0.
\end{align*}
So $\nabla^{\varepsilon}_{X_1} X_1=0$. The same way we prove that $\nabla^{\varepsilon}_{X_2} X_2=0$.\\
We must also compute
\begin{align*}
  g_{\varepsilon}\left(\nabla^{\varepsilon}_{X_1} X_2,X_1\right)&=g_{\varepsilon}\left( \left[ X_1,X_2 \right], X_1 \right)\\
  &=g_{\varepsilon}\left( X_3, X_1 \right)\\
  &=0,
\end{align*}
as well as
\begin{align*}
 2 g_{\varepsilon}\left(\nabla^{\varepsilon}_{X_1} X_2,\varepsilon X_3\right)&=g_{\varepsilon}\left(\left[X_1,X_2\right],\varepsilon X_3\right)=g_{\varepsilon}\left(X_3,\varepsilon X_3\right)=\frac{1}{\varepsilon},
\end{align*}
which implies that
\begin{align*}
 \nabla^{\varepsilon}_{X_1} X_2 &= \frac{X_3}{2}.
\end{align*}
But by the torsion-freedom of the Levi-Civita connection $\nabla^{\varepsilon}_{X_1} X_2 -\nabla^{\varepsilon}_{X_2} X_1=\left[ X_1 , X_2  \right]= X_3$ so
\begin{align*}
 \nabla^{\varepsilon}_{X_2} X_1 &= -\frac{X_3}{2}.
\end{align*}
\end{proof}

Since the main result of this paper is the Taylor expansion in Theorem \ref{TheTheorem}, a natural question is : can we compare it to the same expansion in the $\varepsilon-$Riemannian structures ? Indeed we can, by combining Corollary \ref{curvatureepsilonspaces} and \eqref{RiemannianExpansion}. We denote by $d_\varepsilon$ the distance in the $\varepsilon-$Riemannian structure and we obtain :
\begin{cor}If $\zeta:]-T,T[\rightarrow\mathbb{H}$ is a smooth horizontal curve parametrized by arc length then
\begin{align*}
d_{\varepsilon}^2\left( \zeta(0),\zeta(t)\right)=t^2-\frac{\left(  h_{\zeta}(0)\right)^2 }{12}t^4 +\mathcal{O}(t^5).
\end{align*}
\end{cor}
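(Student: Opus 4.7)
The plan is that this corollary is really just a substitution exercise combining two results already in hand, namely the Riemannian Taylor expansion \eqref{RiemannianExpansion} and the identity $\Vert \nabla^{\varepsilon}_{\zeta'(t)}\zeta' \Vert_{\varepsilon} =|h_{\zeta}(t)|$ from Corollary \ref{curvatureepsilonspaces}. Before applying \eqref{RiemannianExpansion}, however, I first need to make sure that the hypothesis ``$\zeta$ is parametrized by arc length'' is meaningful for each $g_{\varepsilon}$.

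The first step is therefore to observe that the curve $\zeta$, which is horizontal and parametrized by arc length with respect to the sub-Riemannian metric $g$, is still parametrized by arc length with respect to every metric $g_{\varepsilon}$. This is because the vector fields $X_1$ and $X_2$ are orthonormal both for $g$ and for $g_{\varepsilon}$ (by definition the orthonormal frame of $g_{\varepsilon}$ is $(X_1,X_2,\varepsilon X_3)$), so $\zeta'(t)=\cos(\theta(t))X_1+\sin(\theta(t))X_2$ satisfies $\Vert \zeta'(t)\Vert_{\varepsilon}=1$ for every $t$ and every $\varepsilon>0$.

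Once this is checked, I can apply the Riemannian Taylor expansion \eqref{RiemannianExpansion} to $\zeta$ in the $\varepsilon$-Riemannian manifold $(\mathbb{H},g_{\varepsilon})$ at the time $s=0$. This gives
\begin{align*}
d_{\varepsilon}^2(\zeta(0),\zeta(t))=t^2-\frac{\Vert \nabla^{\varepsilon}_{\zeta'(0)}\zeta'\Vert_{\varepsilon}^2}{12}t^4+\mathcal{O}(t^5).
\end{align*}
It then remains to replace $\Vert \nabla^{\varepsilon}_{\zeta'(0)}\zeta'\Vert_{\varepsilon}^2$ by $h_{\zeta}(0)^2$, which is exactly what Corollary \ref{curvatureepsilonspaces} provides. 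The substitution yields the claimed expansion.

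There is no real obstacle here: the only mildly delicate point is the arc-length check, which is immediate from the definition of the $\varepsilon$-structure. The fact that the constant $\tfrac{1}{12}$ in the answer does not depend on $\varepsilon$ is, of course, just a reflection of the $\varepsilon$-independence of $\Vert \nabla^{\varepsilon}_{\zeta'(t)}\zeta' \Vert_{\varepsilon}$ established in Corollary \ref{curvatureepsilonspaces} via Proposition \ref{constantepsiloncurvature}.
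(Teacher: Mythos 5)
Your proposal is correct and follows exactly the route the paper intends: the paper gives no separate proof for this corollary beyond the sentence that it follows ``by combining Corollary \ref{curvatureepsilonspaces} and \eqref{RiemannianExpansion}'', which is precisely your substitution argument. Your added check that $\zeta$ remains unit-speed for every $g_{\varepsilon}$ (since $\zeta'$ lies in the span of $X_1,X_2$, orthonormal for all $\varepsilon$) is a small but worthwhile point the paper leaves implicit.
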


Now we can also give a proposition similar to Corollary \ref{curvatureepsilonspaces} but where the connection that appears actually is linked to the Heisenberg structure and not to a Riemannian approximation. Indeed, in the Heisenberg group, we can define the Tanaka-Webster connection. Before we do this, we recall that the vector fields $X_i$ are defined by \eqref{TheOrthoFrame} and \eqref{TheReebField}.

\begin{de} The Tanaka-Webster connection $\overline{\nabla}$ in the Heisenberg group is the connection, such that :\\

$\bullet$ the Reeb vector field $X_3$ is parallel with respect to $\overline{\nabla}$,\\

$\bullet$ for any horizontal vector field $X$ and any vector field $Y$, $\overline{\nabla}_Y X$ is horizontal,\\

$\bullet$ $\overline{\nabla}g=0$,\\

$\bullet$ the torsion of any two horizontal vector fields is colinear to $X_3$,\\

$\bullet$ for any $i\in\{ 1,2,3\}$, $\overline{\nabla}_{X_3}X_i=0$.
\end{de}
We now link this connection and the characteristic deviation in the Heisenberg group as we did before in the $\varepsilon-$Riemannian structures :
\begin{prop} For $(i,j)\in\lbrace 1,2\rbrace^2$ : $\overline{\nabla}_{X_i} X_j=0$.
\end{prop}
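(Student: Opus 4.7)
The plan is to exploit the five defining properties of $\overline{\nabla}$ in sequence: first horizontality and metric compatibility will reduce the four vector-valued unknowns $\overline{\nabla}_{X_i}X_j$ to two scalar parameters, and then the torsion axiom will kill both scalars simultaneously.

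First I would invoke horizontality: since $X_j$ is horizontal for $j\in\{1,2\}$, the vector field $\overline{\nabla}_{X_i}X_j$ is horizontal, so we can write $\overline{\nabla}_{X_i}X_j=\Gamma^1_{ij}X_1+\Gamma^2_{ij}X_2$ for smooth coefficients $\Gamma^k_{ij}$. Metric compatibility $\overline{\nabla}g=0$, combined with the fact that $g(X_j,X_k)=\delta_{jk}$ is constant on the horizontal frame, then gives after differentiation in the direction $X_i$ the relations $\Gamma^k_{ij}+\Gamma^j_{ik}=0$ for all $i,j,k\in\{1,2\}$. Taking $j=k$ forces the diagonal terms $\Gamma^j_{ij}=0$, while the off-diagonal case yields $\Gamma^2_{i1}=-\Gamma^1_{i2}$. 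Setting $\omega_i:=\Gamma^2_{i1}$, we are left with $\overline{\nabla}_{X_i}X_1=\omega_i X_2$ and $\overline{\nabla}_{X_i}X_2=-\omega_i X_1$, so the connection on horizontal fields depends on only the two numbers $\omega_1,\omega_2$.

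To conclude I would use the torsion axiom together with the bracket identity $[X_1,X_2]=X_3$, which is a direct coordinate computation from the definitions in \eqref{TheOrthoFrame}. The torsion then reads
\[T(X_1,X_2)=\overline{\nabla}_{X_1}X_2-\overline{\nabla}_{X_2}X_1-[X_1,X_2]=-\omega_1 X_1-\omega_2 X_2 - X_3,\]
and demanding that this lie in the span of $X_3$ immediately forces $\omega_1=\omega_2=0$, yielding the proposition.

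I expect no genuine obstacle here; once the first two axioms are applied the four Christoffel vectors are parametrized by only two scalars, and the single nontrivial bracket kills both scalars at once via the torsion axiom. The one mildly delicate point is keeping the signs in the metric-compatibility identity consistent; note also that the two axioms governing $X_3$ (its parallelism under $\overline{\nabla}$ and $\overline{\nabla}_{X_3}X_i=0$) are not needed for the horizontal-horizontal block treated here, they only enter when one extends this computation to the full frame $(X_1,X_2,X_3)$.
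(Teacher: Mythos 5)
Your proof is correct and relies on exactly the same three axioms as the paper --- horizontality of $\overline{\nabla}_{X_i}X_j$, metric compatibility on the orthonormal horizontal frame, and the torsion condition combined with $[X_1,X_2]=X_3$. The only difference is organizational: you apply metric compatibility first (reducing to the two scalars $\omega_1,\omega_2$) and then invoke the torsion axiom, whereas the paper uses the torsion axiom first to get $\overline{\nabla}_{X_1}X_2=\overline{\nabla}_{X_2}X_1$ and then solves the linear system coming from $\overline{\nabla}g=0$; this is an inessential reordering of the same argument.
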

Which entails that :
\begin{cor}
If $\zeta:]-T,T[\rightarrow\mathbb{H}$ is a smooth horizontal curve parametrized by arc length that forms at time $t$ an angle $\theta(t)$ with $X_1$ then
\begin{align*}
\overline{\nabla}_{\zeta'(t)}\zeta' =h_{\zeta}(t)\left( -\sin(\theta(t))X_1\left( \zeta(t) \right) +\cos(\theta(t))X_2\left( \zeta(t) \right) \right).
\end{align*}
In particular,
\begin{align*}
\left\Vert \overline{\nabla}_{\zeta'(t)}\zeta' \right\Vert_{\mathbb{H}} =|h_{\zeta}(t)|.
\end{align*}

\end{cor}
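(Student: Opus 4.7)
The plan is to expand $\overline{\nabla}_{\zeta'(t)}\zeta'$ directly using Leibniz's rule for the connection, and then invoke the preceding proposition together with the identity $h_{\zeta}(t)=\dot{\theta}(t)$ from \eqref{expressionhtheta}.

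First I would write the velocity along the curve as
\begin{align*}
\zeta'(t)=\cos(\theta(t))X_1(\zeta(t))+\sin(\theta(t))X_2(\zeta(t)).
\end{align*}
Applying $\overline{\nabla}_{\zeta'(t)}$ and using the Leibniz rule for the Tanaka-Webster connection together with the fact that $\frac{d}{dt}(\cos\theta(t))=-\sin(\theta(t))\dot{\theta}(t)$ and $\frac{d}{dt}(\sin\theta(t))=\cos(\theta(t))\dot{\theta}(t)$ gives
\begin{align*}
\overline{\nabla}_{\zeta'(t)}\zeta' &= -\sin(\theta(t))\dot{\theta}(t)X_1+\cos(\theta(t))\dot{\theta}(t)X_2\\
&\quad +\cos(\theta(t))\overline{\nabla}_{\zeta'(t)} X_1 +\sin(\theta(t))\overline{\nabla}_{\zeta'(t)} X_2.
\end{align*}

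Next I would handle the two covariant derivatives of the basis vector fields. Expanding $\zeta'(t)$ again in the orthonormal frame and invoking the preceding proposition $\overline{\nabla}_{X_i}X_j=0$ for $(i,j)\in\{1,2\}^2$, both terms $\overline{\nabla}_{\zeta'(t)}X_1$ and $\overline{\nabla}_{\zeta'(t)}X_2$ vanish. Substituting $\dot{\theta}(t)=h_{\zeta}(t)$ from \eqref{expressionhtheta} then yields exactly
\begin{align*}
\overline{\nabla}_{\zeta'(t)}\zeta' =h_{\zeta}(t)\bigl(-\sin(\theta(t))X_1(\zeta(t))+\cos(\theta(t))X_2(\zeta(t))\bigr).
\end{align*}

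For the norm statement, I would observe that the vector $-\sin(\theta(t))X_1+\cos(\theta(t))X_2$ is a unit horizontal vector, since $(X_1,X_2)$ is orthonormal for the sub-Riemannian metric on the distribution and $(-\sin\theta(t),\cos\theta(t))$ has Euclidean norm one. Taking the norm $\|\cdot\|_{\mathbb{H}}$ of the displayed identity therefore gives $\|\overline{\nabla}_{\zeta'(t)}\zeta'\|_{\mathbb{H}}=|h_{\zeta}(t)|$. There is no real obstacle here: the result is essentially a one-line computation once the proposition on $\overline{\nabla}_{X_i}X_j$ is in hand, the only point worth checking being that the resulting vector field is horizontal so that the sub-Riemannian norm is well-defined, which is guaranteed by the second defining axiom of $\overline{\nabla}$ (the covariant derivative of a horizontal vector field is horizontal).
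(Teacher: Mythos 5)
Your proposal is correct and follows exactly the route the paper intends: the corollary is presented as an immediate consequence (``Which entails that'') of the proposition $\overline{\nabla}_{X_i}X_j=0$ for $(i,j)\in\{1,2\}^2$, and your Leibniz-rule computation together with $h_{\zeta}=\dot{\theta}$ from \eqref{expressionhtheta} is precisely the step the paper leaves implicit (the displayed proof in the paper is devoted to establishing that proposition, which you take as given). Your expansion, the tensoriality argument killing $\overline{\nabla}_{\zeta'(t)}X_1$ and $\overline{\nabla}_{\zeta'(t)}X_2$, and the unit-norm observation for $-\sin(\theta(t))X_1+\cos(\theta(t))X_2$ are all sound.
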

\begin{proof}
If we write $T(X,Y)$ for the torsion with respect to the Tanaka-Webster connection of two vector fields $X$ and $Y$ then
\begin{align*}\overline{\nabla}_{X_1}X_2 -\overline{\nabla}_{X_2}X_1 =\left[ X_1, X_2 \right] +T(X_1,X_2)=X_3+T(X_1,X_2).
\end{align*}
By the defintion of $\overline{\nabla}$, the left hand side of the previous identity is in the distribution and the right hand side is colinear to $X_3$. As a consequence, both these quantities vanish and
\begin{align*}
\overline{\nabla}_{X_1}X_2 =\overline{\nabla}_{X_2}X_1.
\end{align*}
And since these vector fields are horizontal, we can rewrite this identity as :
\begin{align}
g\left( \overline{\nabla}_{X_1}X_2,X_1\right)=g\left(\overline{\nabla}_{X_2}X_1,X_1 \right)\text{ and }g\left( \overline{\nabla}_{X_1}X_2,X_2\right)=g\left(\overline{\nabla}_{X_2}X_1,X_2 \right).\label{nablag1}
\end{align}
Moreover, we know by definition of the Tanaka-Webster connection that $\overline{\nabla}g=0$, therefore we can transform all the equations
\begin{align*}
X_ig(X_j,X_k)=0 \text{ for $(i,j,k)\in\lbrace 1,2\rbrace^3$,}
\end{align*}  into
\begin{align}
g\left(\overline{\nabla}_{X_1}X_1 ,X_1  \right)= g\left(\overline{\nabla}_{X_2}X_1 ,X_1  \right) =g\left(\overline{\nabla}_{X_1}X_2 ,X_2  \right)=g\left(\overline{\nabla}_{X_2}X_2 ,X_2  \right)=0,\label{nablag2}
\end{align}
and
\begin{align}
g\left(\overline{\nabla}_{X_1}X_1 ,X_2  \right)+g\left(\overline{\nabla}_{X_1}X_2 ,X_1  \right)=0,\label{nablag3}
\end{align}
as well as
\begin{align}
g\left(\overline{\nabla}_{X_2}X_1 ,X_2  \right)+g\left(\overline{\nabla}_{X_2}X_2 ,X_1  \right)=0,\label{nablag4}
\end{align}
Now if we solve the system that comes from \eqref{nablag1}, \eqref{nablag2}, \eqref{nablag3} and\eqref{nablag4} we find out that for $(i,j,k)\in\lbrace 1,2\rbrace^3$ we have
\begin{align*}
g\left(\overline{\nabla}_{X_i}X_j ,X_k  \right)=0.
\end{align*}
From which we deduce that for $(i,j)\in\lbrace 1,2\rbrace^2$ : $\overline{\nabla}_{X_i} X_j=0$.\\

\end{proof}

A last interpretation of the characteristic deviation of a curve in the Heisenberg space comes from the Euclidean curvature of the projection of the curve on the $(x,y)$-plane. It is not too far-fetched since the Heisenberg group can be constructed in the first place as a convenient extension of the $(x,y)$-plane to solve the isoperimetric problem (see for example \cite{ABB}). 

 We denote by $\pi$ the projection onto the $(x,y)$-plane defined as
$\pi:\mathbb{H}\to \mathbb{R}^2$ such that  $\pi(x,y,z):=(x,y)$.

\begin{prop}\label{propcharacteristicdevprojEuclidcurv} For $\zeta:]-T,T[\rightarrow\mathbb{H}$ a smooth horizontal curve parametrized by arc length, its characteristic deviation at time $t$, $h_{\zeta}(t)$  is equal to the Euclidean curvature of $\pi\circ\zeta$ at time $t$.
\end{prop}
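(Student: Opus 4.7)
The plan is a direct verification using the coordinate expressions already set up earlier in the paper. The projection $\pi\circ\zeta$ is the plane curve $t\mapsto(x(t),y(t))$, and because $\zeta$ is horizontal and parametrized by arc length in the Heisenberg structure, we have
\[
\zeta'(t)=\cos(\theta(t))X_1+\sin(\theta(t))X_2,
\]
so in Euclidean coordinates $\dot{x}(t)=\cos(\theta(t))$ and $\dot{y}(t)=\sin(\theta(t))$. In particular $\dot{x}(t)^2+\dot{y}(t)^2=1$, which means the planar curve $\pi\circ\zeta$ is automatically parametrized by Euclidean arc length. This is the key observation that lets us use the simplest expression for Euclidean curvature.

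First I would recall the standard formula for the signed Euclidean curvature of a plane curve $\gamma(t)=(x(t),y(t))$ parametrized by arc length, namely
\[
\kappa_{\text{eucl}}(t)=\dot{x}(t)\ddot{y}(t)-\dot{y}(t)\ddot{x}(t),
\]
which can be obtained by writing the unit tangent as $(\cos\theta,\sin\theta)$ and noting that $\kappa_{\text{eucl}}=\dot\theta$. Then I would simply compare this with the definition of the characteristic deviation in \eqref{defintionh}: by construction
\[
h_{\zeta}(t)=\dot{x}(t)\ddot{y}(t)-\dot{y}(t)\ddot{x}(t)=\kappa_{\text{eucl}}(\pi\circ\zeta)(t),
\]
and moreover this common value is $\dot\theta(t)$ in view of \eqref{expressionhtheta}, which already provides an independent confirmation.

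There is essentially no obstacle here: the result is a reformulation of the definition once one notices that horizontality together with the orthonormality of $(X_1,X_2)$ makes the Euclidean norm of $(\dot{x},\dot{y})$ equal to the Heisenberg norm of $\zeta'$. The only thing worth being careful with is the sign convention of the Euclidean curvature (here the signed one, so that it agrees with $\dot\theta$ rather than its absolute value), and the fact that arc length parametrization of $\zeta$ in $\mathbb{H}$ automatically yields arc length parametrization of $\pi\circ\zeta$ in $\mathbb{R}^2$, so that the simple formula for $\kappa_{\text{eucl}}$ applies without reparametrization.
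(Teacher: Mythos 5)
Your proposal is correct and follows essentially the same route as the paper: both note that $\dot{x}^2+\dot{y}^2=1$ makes $\pi\circ\zeta$ arc-length parametrized, and then identify $h_{\zeta}(t)=\dot{x}\ddot{y}-\dot{y}\ddot{x}$ with the standard formula for the signed Euclidean curvature. Your extra cross-check via $\dot\theta$ is a harmless bonus; nothing is missing.
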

\begin{proof}
The projection of a smooth horizontal curve $\zeta$ parametrized by arc length along the $z$ axis on the plane $(x,y)$ (that we can endow with the canonical Euclidean structure on $\mathbb{R}^2$) is a curve parametrized by arc length. Indeed $\dot{x}^2+\dot{y}^2=1$ as a consequence of the fact that $\zeta$ is parametrized by arc length in the Heisenberg group. But the expression of the signed curvature of a curve at time $t$ in the Euclidean plane is $$\frac{\dot{x}(t)\ddot{y}(t)-\dot{y}(t)\ddot{x}(t)}{\left( \dot{x}^2(t)+\dot{y}^2(t) \right)^{\frac{3}{2}}}.$$ That means that in the case we are considering $h_{\zeta}$ that is defined in (\ref{defintionh}) is equal to the curvature in the Euclidean plane $(x,y)$ of the projection of $\zeta$ along $z$.
\end{proof}
In particular, we obtain the following corollary :
\begin{cor} The projection along the $z$ axis on the $(x,y)$ plane of the trajectories of curves with constant geodesic curvature are so-called "Euler spirals", which are, up to rotations, translations, symmetries and dilations (followed by an affine reparametrization to keep a unitary speed) no more than the trajectory given by
\[\begin{array}{rcl}
\mathbb{R}&\longrightarrow & \mathbb{R}\\
t&\longmapsto & \begin{pmatrix}
\int_0^t \cos(u^2) \text{d}u \\
\int_0^t \sin(u^2) \text{d}u
\end{pmatrix}.
\end{array}
\]
\end{cor}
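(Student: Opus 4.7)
The plan is to use Proposition \ref{propcharacteristicdevprojEuclidcurv} to reduce the statement to a purely Euclidean question about plane curves, then invoke the fundamental theorem of plane curves to identify the projection with the displayed model.

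First I would translate the hypothesis. By Proposition \ref{propcharacteristicdevprojEuclidcurv}, the characteristic deviation $h_{\zeta}(t)$ equals the signed Euclidean curvature of $\pi\circ\zeta$ at time $t$. Since $\dot{x}^2+\dot{y}^2=1$ (arc-length parametrization of $\zeta$ for the orthonormal frame $(X_1,X_2)$), the planar curve $\pi\circ\zeta$ is also parametrized by arc length. The hypothesis that the geodesic curvature $k_{\zeta}=\dot{h}_{\zeta}$ is constant is therefore equivalent to saying that the Euclidean signed curvature of $\pi\circ\zeta$ is an affine function of arc length: $h_{\zeta}(s)=as+b$ for some constants $a,b$.

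Next I would check the model. A direct differentiation of $\gamma(t)=\bigl(\int_0^t\cos(u^2)\,du,\int_0^t\sin(u^2)\,du\bigr)$ gives $\dot\gamma(t)=(\cos(t^2),\sin(t^2))$, hence unit speed, and $\ddot\gamma(t)=2t(-\sin(t^2),\cos(t^2))$, hence signed curvature $\dot{x}\ddot{y}-\dot{y}\ddot{x}=2t$. So the model curve is exactly the unit-speed plane curve whose curvature is the linear function $s\mapsto 2s$.

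Then I would perform the reduction. Assuming $a\neq 0$ (the case $a=0$ gives constant curvature, i.e.\ a line or a circle, which is the degenerate limit of Euler spirals), the time shift $s\mapsto s-b/a$ turns the curvature into $s\mapsto as$. A planar dilation of factor $r>0$ sends a unit-speed curve of signed curvature $\kappa(s)$ to a curve of signed curvature $\kappa(s/r)/r$ once reparametrized by arc length; taking $r=\sqrt{|a|/2}$ brings $as$ to $\pm 2s$, and a reflection across a line changes the sign if needed. Finally, the fundamental theorem of plane curves (uniqueness up to an orientation-preserving Euclidean isometry of a unit-speed plane curve with a prescribed signed curvature function) identifies the result with $\gamma$ up to rotation and translation. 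The main obstacle, if any, is purely bookkeeping: carefully tracking how dilations, reflections, and time shifts act on signed curvature under the unit-speed constraint so that the list of allowed operations in the statement is exactly what is needed.
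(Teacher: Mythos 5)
Your proof is correct and follows essentially the same route as the paper: both reduce the statement via Proposition \ref{propcharacteristicdevprojEuclidcurv} to the observation that the projection is a unit-speed plane curve whose signed curvature is an affine function of arc length. The only difference is the final identification step, where the paper integrates the angle explicitly (obtaining a quadratic phase $\pm\left(\left(as+b\right)^2+c\right)$) and performs the substitution $u=as+b$ to exhibit the rotation, reflection and dilation concretely, whereas you invoke the fundamental theorem of plane curves and track how signed curvature transforms under time shifts, dilations and reflections; your explicit handling of the degenerate case $a=0$ (where the projection is a circle or a line rather than a genuine Euler spiral) addresses a point that the paper's final formula, which divides by $a$, silently omits.
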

\begin{proof}
For $\zeta$ a smooth horizontal curve parametrized by arc length, its geodesic curvature $k_{\zeta}$ is by definition the derivative of $h_{\zeta}$ which is itself the Euclidean curvature of the projection of $\zeta$ on the plane $(x,y)$ according to Proposition \ref{propcharacteristicdevprojEuclidcurv}. Therefore, the curves that have no geodesic curvatures are projected along $z$ onto circles of the plane $(x,y)$, while the curves with constant geodesic curvature in the Heisenberg group are projected onto curves with affine Euclidean curvature.
But curves with affine Euclidean curvature have their velocity that forms a quadratic angle with a fixed direction.

In coordinates, we can write the projection through $\pi$ of any curve $\zeta$ with constant geodesic curvature in the Heisenberg group as :
\begin{align*}
\pi\circ\zeta(t)&=\begin{pmatrix}
\int_{0}^t\cos\left(\pm\left(\left( as +b \right)^2 +c \right)\right) \text{d}s \\
\int_{0}^t\sin\left(\pm\left(\left( as +b \right)^2 +c \right)\right) \text{d}s 
\end{pmatrix}.
\end{align*}
Therefore an arbitrary curve with constant geodesic curvature is projected through $\pi$ onto :
\begin{align*}
\frac{1}{a}\begin{pmatrix} 1 & 0\\
0 &\pm 1
\end{pmatrix}
\begin{pmatrix}
\cos(c) & -\sin(c)\\
\sin(c) & \cos(c)
\end{pmatrix}\begin{pmatrix}
\int_{0}^{at+b}\cos\left(u^2  \right) \text{d}u \\
\int_{0}^{at+b}\sin\left( u^2 \right) \text{d}u 
\end{pmatrix}.
\end{align*}
\end{proof}
\begin{rem} Euler spirals have been extensively studied. It is possible to find a history and important properties of those curves in \cite{levien2008euler}.
\end{rem}

Another property of the curvature we might be interested in is : how is it transformed by the action of dilations ? We give the answer in the following proposition
\begin{prop} We consider $\zeta:]-T,T[\rightarrow\mathbb{H}$ a smooth horizontal curve parametrized by arc length. For $r>0$, its dilation
$$\xi_{r}(t):=\delta_{r}\circ \zeta\left( \frac{t}{r} \right)$$
is horizontal and parametrized by arc length. Moreover the geodesic curvature of the dilated curve is linked to the geodesic curvature of the initial curve by the relation :
$$k_{\xi_r}(rt)=\frac{1}{r^2}k_{\zeta}(t).$$
\end{prop}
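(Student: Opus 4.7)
The plan is to handle the three assertions in the order they are stated, reducing each to a direct chain-rule computation in coordinates. Write $\zeta(s)=(x(s),y(s),z(s))$ and use the defining formula $\delta_r(x,y,z)=(rx,ry,r^2 z)$, so that in coordinates
\[
\xi_r(t)=\bigl(r\,x(t/r),\,r\,y(t/r),\,r^2 z(t/r)\bigr).
\]
From this explicit expression everything will follow by differentiating in $t$.

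First I would verify horizontality and unit parametrization. Dilations preserve the distribution (they send horizontal vectors to horizontal vectors), so $\xi_r'(t)=\tfrac{1}{r}\,(\delta_r)_*\zeta'(t/r)$ is horizontal; and by the homogeneity identity \eqref{normdilation}, $\|\xi_r'(t)\|_{\mathbb{H}}=\tfrac{1}{r}\cdot r\,\|\zeta'(t/r)\|_{\mathbb{H}}=1$. Alternatively, one can just note that the first two coordinates of $\xi_r$ satisfy $\dot x_{\xi_r}(t)=\dot x(t/r)=\cos\theta(t/r)$ and $\dot y_{\xi_r}(t)=\dot y(t/r)=\sin\theta(t/r)$, which makes horizontality and unit speed immediate and also shows that the angle function of $\xi_r$ is $\theta_{\xi_r}(t)=\theta(t/r)$.

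Next I would compute the characteristic deviation using the defining formula \eqref{defintionh}. Differentiating once more in $t$, one gets $\ddot x_{\xi_r}(t)=\tfrac{1}{r}\ddot x(t/r)$ and $\ddot y_{\xi_r}(t)=\tfrac{1}{r}\ddot y(t/r)$, hence
\[
h_{\xi_r}(t)=\dot x_{\xi_r}(t)\ddot y_{\xi_r}(t)-\dot y_{\xi_r}(t)\ddot x_{\xi_r}(t)=\frac{1}{r}\,h_\zeta(t/r),
\]
which of course matches $\dot\theta_{\xi_r}(t)=\tfrac{1}{r}\dot\theta(t/r)$ via \eqref{expressionhtheta}. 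Differentiating once more,
\[
k_{\xi_r}(t)=\dot h_{\xi_r}(t)=\frac{1}{r^2}\,\dot h_\zeta(t/r)=\frac{1}{r^2}\,k_\zeta(t/r),
\]
and evaluating at $t\mapsto rt$ gives the announced relation $k_{\xi_r}(rt)=\tfrac{1}{r^2}k_\zeta(t)$.

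There is no real obstacle here: everything reduces to the chain rule, the explicit form of $\delta_r$, and the already-established facts that dilations preserve the distribution and scale horizontal norms by $r$. If anything, the only point that might need a brief justification is the coordinate formula for $\xi_r$, i.e.\ confirming that pushing a horizontal curve $\zeta$ through $\delta_r$ corresponds in coordinates to $(x,y,z)\mapsto(rx,ry,r^2z)$; this is just the definition of $\delta_r$ applied pointwise, with no sub-Riemannian subtleties beyond what the preceding sections already record.
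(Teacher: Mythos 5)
Your proof is correct and follows essentially the same route as the paper: horizontality and unit speed via the pushforward and the scaling identity \eqref{normdilation}, then the scaling $h_{\xi_r}(rt)=\tfrac{1}{r}h_\zeta(t)$, then one more derivative. The only cosmetic difference is that you obtain the scaling of $h$ by a direct chain-rule computation on the defining formula \eqref{defintionh}, whereas the paper quotes Proposition \ref{propcharacteristicdevprojEuclidcurv} (characteristic deviation $=$ Euclidean curvature of the projection) together with the fact that $\delta_r$ projects to a Euclidean dilation; these amount to the same calculation.
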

\begin{proof}
We already stated as a general property of dilations that they preserve horizontal curves and thanks to \eqref{normdilation} we learn that for $\zeta$ a horizontal curve parametrized by arc length and $r$ positive, the curve $\xi_{r}$  is a horizontal curve parametrized by arc length. Now we remember that according to \ref{propcharacteristicdevprojEuclidcurv}, the characteristic deviation of a smooth horizontal curve parametrized by arc length is simply the curvature of its projection along $z$ and since dilations of the Heisenberg group act as usual Euclidean dilations when projected on $(x,y)$ we find out that $h_{\xi_r}(rt)=\frac{1}{r}h_{\zeta}(t)$ which implies by definition of the geodesic curvature, that $k_{\xi_r}(rt)=\frac{1}{r^2}k_{\zeta}(t)$.
\end{proof}

\appendix

\section{Proof of the identity \eqref{expressionrcarre}}
We recall that according to Proposition \ref{propregularityeqdiff} :
\begin{align*}
{x^2(t)+y^2(t)}  &=4\int_0^t\int_0^u\left( -\dot{\theta}(s)\dot{z}(s) +\frac{1}{2} \right)\text{d}s\text{d}u.
\end{align*}
As a consequence 
\begin{align*}
{x^2(0)+y^2(0)}=\frac{\partial}{\partial t} \Big|_{t=0}\left({x^2(t)+y^2(t)}\right)=0\text{,\qquad}
\end{align*}
\begin{align*}
 \frac{\partial^2}{\partial t^2} \Big|_{t=0}\left({x^2(t)+y^2(t)}\right)={2} \text{ (since $\dot{z}(0)=0$ by Proposition \ref{propregularityeqdiff}),}
\end{align*}
and for $n\geqslant 3$ :
\begin{align*}
\frac{\partial^n}{\partial t^n}\left( {x^2(t)+y^2(t)} \right)&=-4\sum_{i=0}^{n-2}\begin{pmatrix}
n-2\\
i
\end{pmatrix} \theta^{(i+1)}z^{(n-i-1)}.
\end{align*}
Now we remember that $\dot{z}(0)=\ddot{z}(0)=0$ by Proposition \ref{propregularityeqdiff}) and that $z^{(3)}(0)$, $z^{(4)}(0)$ and $z^{(5)}(0)$ are given by (\ref{d3z}), (\ref{d4z}) and (\ref{d5z}) so we get
\begin{align*}
\frac{\partial^3}{\partial t^3}\Big|_{t=0}\left( {x^2(t)+y^2(t)} \right)=0,
\end{align*}
and
\begin{align*}
\frac{\partial^4}{\partial t^4}\Big|_{t=0}\left( {x^2(t)+y^2(t)} \right)&=-4\dot{\theta}(0)z^{(3)}(0)
=-2{\dot{\theta}^2(0)},
\end{align*}
and also
\begin{align*}
\frac{\partial^5}{\partial t^5}\Big|_{t=0}\left( {x^2(t)+y^2(t)} \right)&=-12\ddot{\theta}(0)z^{(3)}(0)-4\dot{\theta}(0)z^{(4)}(0)=-10\dot{\theta}(0)\ddot{\theta}(0),
\end{align*}
and finally
\begin{align*}
\frac{\partial^6}{\partial t^6}\Big|_{t=0}\left( {x^2(t)+y^2(t)} \right)&=-24\dddot{\theta}(0)z^{(3)}(0)-16\ddot{\theta}(0)z^{(4)}(0)-4\dot{\theta}(0)z^{(5)}(0)\\
&=-18\dot{\theta}(0)\dddot{\theta}(0)-16\ddot{\theta}^2(0) +2{\dot{\theta}^4(0)}.
\end{align*}
\label{derivdistance}

\bibliographystyle{alpha}
\bibliography{biblioheis}

\end{document}